\newtheorem{Def}{Definition}
\newtheorem{Prop}[Def]{Proposition}
\newtheorem{Lemma}[Def]{Lemma}
\newenvironment{customthm}[1]
  {\innercustomthm}
  {\endinnercustomthm}
\title{Splittings of free groups from arcs and curves}
\author{Maxwell Forlini}
\email{msforlini@gmail.com}
\date{\today}
\begin{document}

\maketitle

\begin{abstract}
We show that the arc graph of $S_g^1$ is a coarse Lipschitz retract of the  free splitting complex of $F_{2g}$. We also show that the arc and curve graph of $S_g^1$ is a coarse Lipschitz retract of both the cyclic splitting graph of $F_{2g}$ and the maximally cyclic splitting graph of $F_{2g}$.
\end{abstract}

\section{Introduction}

Let $S_g^1$ be a closed, orientable surface of genus $g\geq 2$ with one boundary component, and fix an identification of $\pi_1(S_g^1)$ with $F_{2g}$, the free group on $2g$ generators. Let $Mod(S_g^1)$ denote the mapping class group of $S_g^1$. Let $f$ be a coarsely well-defined map between a metric space $X$ to subspace $Y$; $f$ is a $(K, C)$-\textit{coarsely Lipschitz retraction} if $f|_Y=id$, and $diam_Y(f(a) \cup f(b)) \leq K d_X(a,b) + C$ for all $a,b\in X$ where $K \geq 1$, $C\geq 0$ are constants. In this paper, we will define a $Mod(S_g^1)$-equivariant map of $\mathcal{A}(S_g^1)$ into $FS_{2g}$ and show that the image of $\mathcal{A}(S_g^1)$ is a $(1,C)$-coarse Lipschitz retract of $FS_{2g}$. In particular, this implies that the map is a quasi-isometric embedding of $\mathcal{A}(S_g^1)$ into $FS_{2g}$. We will also prove an analogous result with $\mathcal{AC}(S_g^1)$ and two cyclic splitting graphs, $FZ_{2g}$ and $FZ^{max}_{2g}$.

\subsection{Splitting Complexes}
 A \textit{splitting}, $T$, of $F_n$ is a simplicial tree along with a minimal simplicial action of $F_n$. Two splittings are equivalent if there exists an $F_n$-equivariant homeomorphism between them. A splitting is a \textit{free} (resp. \textit{cyclic}) \textit{$k$-edge splitting} when there are $k$ orbits of edges and the edge stabilizers are trivial (resp. cyclic). A splitting, $T$, is a refinement of $T'$ if we can obtain $T$ by equivariantly collapsing edges of $T'$. In practice we will work with a splitting by way of the total space, $X_T$. The \textit{total space} of a splitting is a $K(F_n, 1)$ constructed as follows: Take a $K(G_v,1)$ for each orbit of vertices and a $K(G_e,1) \times [0,1]$ for each orbit of edges where $G_v$ and $G_e$ are vertex and edge stabilizers respectively. To construct $X_T$ we take the quotient of these spaces with identifications between the $K(G_v,1)$ and $K(G_e, 1) \times 0$ (or $K(G_e, 1) \times 1$) that induce the monomorphisms: $G_e \mapsto G_v$ \cite{ScottWall1977}. 

 The \textit{free splitting complex}, $FS_n$, of $F_n$ is the simplicial complex where an $n$-simplex is an $(n+1)$-edge free splitting \cite{HandelMosher2013}. The \textit{cyclic splitting graph}, $ FZ_n$,  is the graph where the vertices are $1$-edge cyclic splittings, and two vertices are adjacent when they are both a refinement of the same $2$-edge cyclic splitting \cite{Mann2014}. Note that $FS_n$ is a sub-complex of $FZ_n$ as a cyclic splitting can have trivial edge groups. The \textit{maximally cyclic splitting graph}, $FZ^{max}_n$, is defined as $FZ_n$ along with the additional restriction that the edge stabilizers be closed under taking roots \cite{HorbezWade2015}. Both cyclic graphs function identically in our proof, so we let $\mathcal{Z}_{2g}$ stand in for $FZ_{2g}$ and $FZ^{max}_{2g}$. 

The \textit{arc graph}, $\mathcal{A}(S_g^1)$, of $S_g^1$ is the graph where vertices are free isotopy classes of arcs, and two vertices are adjacent when the two arcs can be realized disjointly on $S_g^1$. The \textit{arc and curve graph}, $\mathcal{AC}(S_g^1)$, of $S_g^1$ is defined in the same way but the vertices range over free isotopy classes of arcs and curves.

\begin{Def} $\psi: \mathcal{A}(S_g^1) \mapsto FS_{2g}$ is given by collapsing a neighborhood of an arc to $\partial S_g^1$ which gives an $X_T$ for a $1$-edge splitting, $T$. \end{Def}

\begin{Def} $\psi_Z: \mathcal{AC}(S_g^1) \mapsto  \mathcal{Z}_{2g}$ is given as $\psi$ for arcs. For curves, take an annulus of the curve to be the edge space of an $X_T$ for a $1$-edge $\mathbb{Z}$-splitting $T$. \end{Def}
We can also view these maps in terms of lifting an arc or curve to the universal cover and letting $T$ be the dual tree of these lifts.
 
In \cite{HamenstädtHensel2015}, Hamenst\"{a}dt and Hensel show that there exists a $1$-Lipschitz retraction of $FS_{2g}$ to $\mathcal{A}(S_g^1)$. We will provide an alternate approach, as well as showing that $\mathcal{AC}(S_g^1)$ is a $(1,C)$-coarse Lipschitz retract of $\mathcal{Z}_{2g}$: 

\begin{customthm}{A} $\mathcal{A}(S_g^1)$ is a $(1,C)$-coarse Lipschitz retract of $FS_{2g}$. In particular, $\psi$ is a $Mod(S_g^1)$-equivariant quasi-isometric embedding of  $\mathcal{A}(S_g^1)$ into $FS_{2g}$.   \end{customthm}

\begin{customthm}{B$'$} $\mathcal{AC}(S_g^1)$ is a $(1,C)$-coarse Lipschitz retract of $FZ_{2g}$. In particular, $\psi_Z$ is a $Mod(S_g^1)$-equivariant quasi-isometric embedding of $\mathcal{AC}(S_g^1)$ into $FZ_{2g}$.   \end{customthm}

\begin{customthm}{B$''$} $\mathcal{AC}(S_g^1)$ is a $(1,C)$-coarse Lipschitz retract of $FZ^{max}_{2g}$. In particular, $\psi_Z$ is a $Mod(S_g^1)$-equivariant quasi-isometric embedding of $\mathcal{AC}(S_g^1)$ into $FZ^{max}_{2g}$.   \end{customthm}

The constant $C$ depends only the genus of the surface; in particular, we have $C=5376(6g-3)^2$ for Theorem A. For Theorem B$'$ \& B$''$, we have $C=21504(6g-3)^2$ .

In Section 2 we prove a pair of key technical lemmas about arcs on $S_g^1$, Section 3 contains the proof of Theorem A, and in Section 4 we provide the necessary changes to Section 3 in order to prove Theorem B$'$ \& B$''$. 
\subsection{Acknowledgments} The author would like to thank Mladen Bestvina for his guidance in this project and Richard D. Wade for his many helpful comments on an earlier draft. The author would also like to thank Morgan Cesa, Radhika Gupta and Kishalaya Saha for their helpful and inspiring conversations. 

\subsection{Retraction} We will now define the retraction, $\phi$; showing $\phi$ is a coarsely well defined map will form the bulk of the paper. Let $T$ be a splitting of $F_{2g}$. As $S_g^1$ and $X_T$ share a fundamental group, we can consider homotopy equivalences between them. If the equivalence, $F$, is transverse to a point, $p$, on the interior of the edge of $X_T$, then $F^{-1}(p)$ will be a $1$-manifold on $S_g^1$. 

\begin{Def} Fix a point, $p$, on the interior of the edge space of $X_T$. Let $\lbrace F_i \rbrace_{i \in I}$ be a collection of homotopy equivalences, $F_i: S_g^1 \mapsto X_T$, such that $F_i$ is transverse to $p$ and $|F_i^{-1}(p) \cap \partial S_g^1|$ is minimal over all homotopy equivalences. \end{Def}
\begin{Def}
$\phi: FS_{2g} \mapsto \mathcal{A}(S_g^1)$ is the collection of arcs contained in a $F_i^{-1}(p)$ over all $i \in I$. \end{Def}

    We note that $F_i^{-1}(p)$ is nonempty since $F_i$, as a homotopy equivalence, must map $S_g^1$ to the edge of any non-trivial $1$-edge splitting. It is possible for $F_i^{-1}(p)$ to contain curves; however, these must be homotopically trivial as they are mapped to a point, and hence can be removed. $F_i^{-1}(p)$ must then be a collection of arcs, so $\phi(T)$ is non-empty for $T \in FS_{2g}$. 

 The word represented by the boundary loop of $S_g^1$ has a unique reduced form with respect to a splitting. The word length of this reduced form is $|F_i^{-1}(p) \cap \partial S_g^1|$ as we took it to be minimal over all equivalences. Since the reduced form is unique, we can then take the $F_i$ to be equal on the boundary. Let $F_{\partial S_g^1}$ be the restriction of the $F_i$ to $\partial S_g^1$. Note the word length of the reduced form, and hence $|F_i^{-1}(p) \cap \partial S_g^1|$, is unbounded over all splittings, and so we can not use this to bound the number of arcs.

   It should be noted that we have no obvious idea of what $\phi(T)$ will look like in $\mathcal{A}(S_g^1)$. During a homotopy between $F_i$ and $F_j$, it is not necessary for the equivalence to remain transverse. Arcs in $F_i^{-1}(p)$ may join and then separate into different arcs during the homotopy, and these new arcs need not be close to arcs in $F_i^{-1}(p)$.

\section{Arcs}

   The goal is of this section is to understand how a power of an arbitrary curve, $\beta$, on $S_{g}^{1}$ can be decomposed into embedded arcs. When $\beta$ is simple, we show that the embedded arcs will be close in $\mathcal{A}(S_g^1)$ (Lemma 10). If $\beta$ is not simple, we will prove that for $n \geq 12$, $\beta^n$ can not be expressed as the union of two embedded arcs (Lemma 9). To show this, we will consider the self intersections of a nice representative of $\beta^n$. The order the self intersections appear in will make it impossible for the curve to be decomposed into two embedded segments. Finally, we will show that the relative order of some these intersections can only be changed via homotopy in ways that are not compatible with a decomposition into arcs. In order to keep track of intersections, we will consider homotopies as a sequence of \textit{Reidemeister moves}:

\begin{figure}[h!]
\centering
\labellist
\pinlabel Monogon: [r] at -20 239
\pinlabel Bigon: [r] at -20 136
\pinlabel Triangle: [r] at -20 34
\endlabellist

\includegraphics[scale=.5]{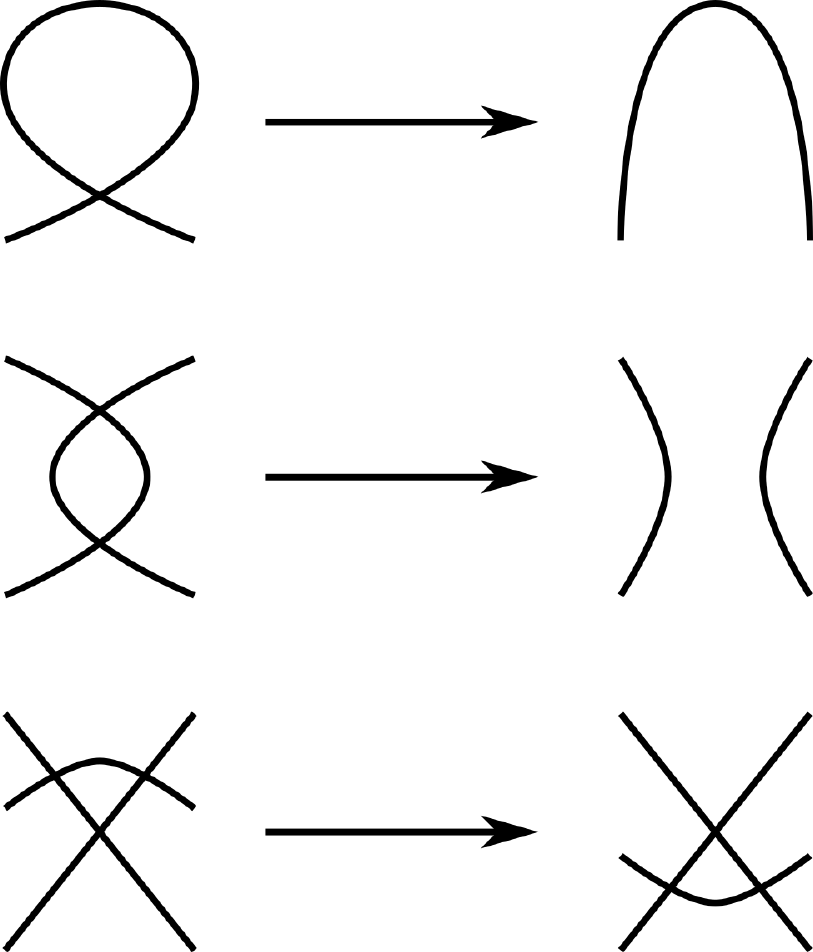}

\end{figure}

The moves occur in a disk which intersects the curve only as pictured. An \textit{embedded triangle} will be as pictured above, i.e bounding a disk, but the curve may intersect an embedded triangle.

\subsection{Coloring of Curves}

The existence of bigons, and hence bigon moves, will be a key technical detail in our proof. This is due to the notion of coloring a curve. A curve (not necessarily simple) on $S_{g}^{1}$ is \textit{two colorable} if it can be decomposed into two segments such that each segment is embedded. Let $c_1, c_2$ be the endpoints of these segments; we can take the $c_i$ such that they are not intersections of the curve. One can homotope any curve by pushing all of its intersections to a short embedded segment which yields an obvious two coloring of the curve. However, this pushing homotopy will form bigons for some curves. If a segment of the curve intersects the bigon region, it is possible that we can not remove the bigon and preserve the two coloring (Figure 1). We note that while bigons affect colorability and are the focus of our next proposition, it is not hard to see that monogons will not affect two colorability.  

\begin{Prop} If $\alpha$ is two colorable and $c_1, c_2 \in \partial S_g^1$, then we can remove all but possibly one bigon, $K_{\alpha}$, such that the the resulting curve, $\alpha'$, is two colorable. \end{Prop}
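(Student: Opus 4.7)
The plan is to proceed by induction on the number of bigons of $\alpha$, removing one at a time via Reidemeister II moves while preserving two-colorability. The key structural observation is that every bigon of $\alpha$ has its two boundary arcs in distinct colors. A bigon's boundary arcs lie in the interior of $S_g^1$ (their endpoints are self-intersections of $\alpha$), so they contain neither $c_1$ nor $c_2$, and each is therefore monochromatic. If both boundary arcs were the same color, then the corresponding embedded colored segment would cross itself at the two bigon vertices, contradicting two-colorability. The same color analysis forces any strand of $\alpha$ passing through a bigon's interior to enter and exit through the same boundary arc: a strand entering via $a_1$ and exiting via $a_2$ would have to be simultaneously the opposite color of $a_1$ and of $a_2$, which is impossible.

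Using the standard nesting argument --- an interior strand together with a sub-arc of the boundary bounds a strictly smaller bigon --- I will extract an innermost bigon $B$ whose interior is disjoint from $\alpha$. For this $B$, perform a Reidemeister II move that slides the red boundary arc $a_r$ past the blue boundary arc $a_b$ to a perturbation $a_r'$ lying in a thin neighborhood of $a_b$ on the exterior. The crucial check is that this preserves two-colorability. Because $B$ is innermost, no strand of $\alpha$ crosses $\partial B$ transversely at an interior point of $a_r$ or $a_b$ (otherwise a strand would have to enter the empty interior of $B$), so the perturbed arc $a_r'$ is disjoint from every other strand of $\alpha$. Hence the modified red segment remains embedded, the blue segment is unchanged, and the resulting curve is two-colorable with two fewer self-intersections than $\alpha$.

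Iterating this step strictly decreases the number of self-intersections and therefore terminates, yielding $\alpha'$ with at most one remaining bigon $K_\alpha$. The main technical hurdle is the embeddedness verification following the Reidemeister II move, resting on the color constraints above; the rest of the argument is a routine induction on crossing number.
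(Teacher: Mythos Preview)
Your argument has a genuine gap at the very first step. You assert that ``a bigon's boundary arcs lie in the interior of $S_g^1$ (their endpoints are self-intersections of $\alpha$), so they contain neither $c_1$ nor $c_2$.'' The parenthetical is true, but the conclusion does not follow: an arc of $\alpha$ whose two endpoints are self-intersections in $\operatorname{Int}(S_g^1)$ can still pass through a point of $\partial S_g^1$ along the way. Since $\alpha$ is the concatenation of two proper arcs meeting at $c_1,c_2\in\partial S_g^1$, a sub-arc of $\alpha$ running from one self-intersection through $c_i$ to another self-intersection is perfectly possible, and such a sub-arc can be one side of a bigon. This is exactly the $K_\alpha$ configuration the proposition singles out (and which Figure~1 in the paper depicts): a bigon each of whose sides carries one of the $c_i$ in its interior, so that neither side is monochromatic. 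Your color argument (``both sides have different colors, and any transverse strand enters and exits through the same side'') breaks down for such a bigon, and indeed a Reidemeister~II move on it need not preserve two-colorability.

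As written, your inductive procedure therefore never encounters an obstruction, and would prove that \emph{all} bigons can be removed while keeping $\alpha$ two-colored. That is strictly stronger than the statement and is false. Your final sentence (``yielding $\alpha'$ with at most one remaining bigon $K_\alpha$'') is not supported by anything in the argument; nothing you wrote isolates an exceptional bigon. The paper's proof proceeds differently: it directly analyzes when a bigon $K$ cannot be removed color-preservingly, shows the only obstruction is a $c_i$ lying on $\partial K$ (the case $c_i\in\operatorname{Int}(K)$ being excluded by $c_i\in\partial S_g^1$), and then observes that in that case both $c_1$ and $c_2$ must lie on $\partial K$, one on each side, which pins down a unique candidate $K_\alpha$. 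You need this case analysis; the innermost-bigon induction alone does not see it.
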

\begin{proof} 

If we can not remove a bigon, $K$, and preserve colorability, then a $c_i$ either occurs on $\alpha$ in $Int(K)$ or in $\partial K$. If a $c_i$ occurs on $\alpha$ in $Int(K)$, then clearly $c_i \notin \partial S_{g}^{1}$.  

If $c_i \in \partial K$, then $c_1$ and $c_2 \in \partial K$. Moreover, both segments that form $K$ will contain a $c_i$ in its interior; otherwise, one of the segments would be a single color and would intersect the other segment in two colors. There can only be one such $K$, which we call $K_{\alpha}$, as the two segments that form $K_i$ will be determined by $c_1$ and $c_2$. If $K_{\alpha}$ is contained in another bigon, then either $c_1$ or $c_2 \notin \partial S_{g}^{1}$; therefore, we can remove all other bigons without removing $K_{\alpha}$. Removing these bigons preserves colorability, so our new curve, $\alpha'$, is two colorable.    
\end{proof}

\begin{figure}[h!]
\centering

\includegraphics[scale=.6]{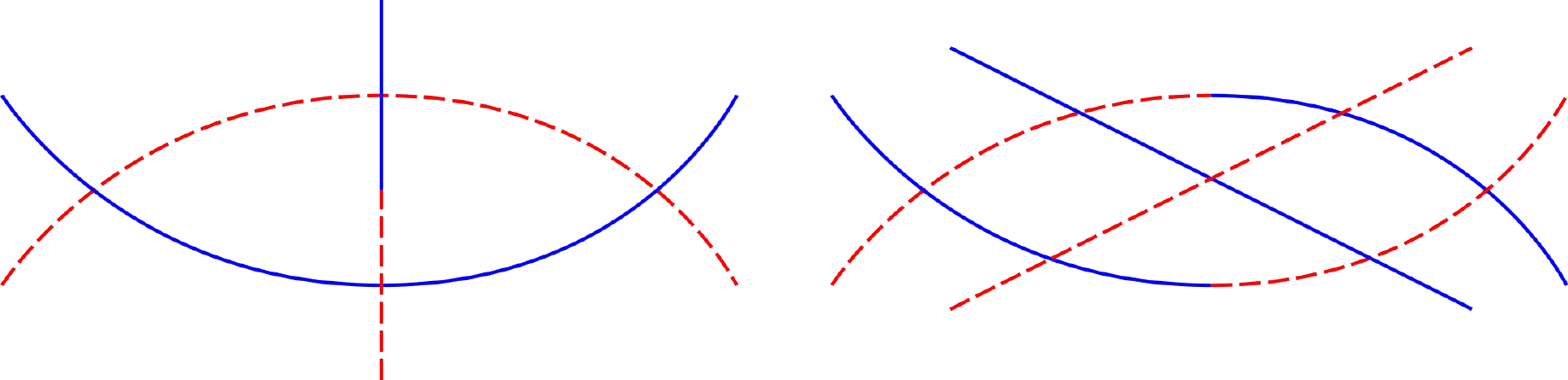}
\caption{The bigon on the right is an example of a $K_{\alpha}$.}
\end{figure}

\subsection{Relative Order}
\begin{Def} Given a curve, $\alpha$, fix a basepoint and orientation. Enumerate the self intersections, $a_1, a_2, \ldots ,a_n$. We record the order of these intersection by assigning a pair, $(k_1, k_2)$, to an intersection $a_k$ where $a_k$ is seen as the $k_1$-th and $k_2$-th intersection.\end{Def}

In practice, we will select a curve, $\alpha$, in minimal position when we enumerate the intersections. Given another representative of the curve, $\alpha'$, we will obtain a map from the $a_k$'s to the $a'_k$'s via the homotopy. We will only be concerned with $a'_k$'s that are associated with $a_k$'s, so for ease of notation we will refer to the $a'_k$'s as $a_k$'s on $\alpha'$. The \textit{relative order} of a collection of intersections refers to the ordering of the $k_1$, $k_2$ associated to the intersections in the cyclic ordering, $\prec_c$, on $1,2...2n$. We will be concerned about whether we see an intersection, $a_k$, twice, before seeing another intersection, $a_m$, once. We say a collection of intersections is \textit{consecutively ordered} if for any $a_k$, $a_m$ in the collection with pairs $(k_1, k_2), (m_1, m_2)$, we have $k_1 \prec_c k_2 \prec_c m_1 \prec_c m_2$ up to permutation of $k_1$ with $k_2$ and $m_1$ with $m_2$. Note that a collection being consecutively ordered only depends on the relative order of the collection, not of the whole curve.

  \begin{Prop} Let $\alpha$ be a curve with self intersections $a_1,a_2, \ldots ,a_n$. If $\alpha'$ is a curve obtained from $\alpha$ by a sequence of triangle moves, then the relative order of $a_i$ and $a_j$ can be altered only if there exists an embedded triangle with $a_i$ and $a_j$ as vertices in $\alpha$.\end{Prop}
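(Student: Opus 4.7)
The plan is to localize the effect of a single triangle move and then iterate. A triangle move is supported in an embedded disk $D$ that meets $\alpha$ only in the three transverse strands pictured in the Reidemeister move, pairwise crossing at the three vertices $a_i, a_k, a_\ell$ of the embedded triangle $T$ being moved. Outside $D$ the curve is unchanged and the six points of $\alpha\cap\partial D$ are fixed throughout the move; I will use this to track visits to self-intersections via the partition of $\alpha$ cut out by $\partial D$.

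These six boundary points slice $\alpha$ into three \emph{inside arcs} (the strands of $T$) and three \emph{outside arcs}. Since $D$ contains no other part of $\alpha$, each inside arc carries exactly two visits, to two of $\{a_i, a_k, a_\ell\}$, while every visit to an intersection $a_m \notin \{a_i, a_k, a_\ell\}$ lies in an outside arc. Because the move is supported in $D$, the outside arcs are pointwise fixed, so the parametric positions of all visits to non-triangle intersections are unchanged. On each inside arc the move reverses the order of its two triangle-vertex visits (the strand that previously crossed one strand before the other now crosses them in the opposite order), so each inside arc keeps the same pair of labels but in reversed order.

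Given this setup, I plan to prove the contrapositive of the single-move version: if $a_p \notin \{a_i, a_k, a_\ell\}$, then the relative order of $a_p$ and $a_q$ is preserved. Both visits to $a_p$ lie in outside arcs and so are fixed. If $a_q$ is also not a triangle vertex, its visits are fixed too and the cyclic order of $(p_1, p_2, q_1, q_2)$ is unchanged. Otherwise $a_q$'s two visits lie in two distinct inside arcs; each may swap with its sibling within its arc, i.e.\ shift by exactly one slot in the cyclic sequence of $2n$ visits, but that adjacent slot is itself an inside-arc visit to a triangle vertex, never to $a_p$. Hence no visit of $a_q$ crosses a visit of $a_p$, and the cyclic order of the four visits is preserved. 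The sequence version then follows by induction, applied to the first move in the sequence at which the relative order of $a_i, a_j$ changes; that move's embedded triangle has both $a_i$ and $a_j$ among its vertices. The main thing to verify carefully is the exact label permutation that the Reidemeister III move induces on the six in-disk slots; this reduces to inspecting the standard picture, using crucially that $D$ contains no extra strands of $\alpha$.
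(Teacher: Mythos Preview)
Your single-move analysis is careful and correct, and more explicit than the paper's one-line version. The gap is in the passage to sequences. You locate the first move at which the relative order of $a_i$ and $a_j$ changes and correctly deduce that the Reidemeister triangle of that move has both $a_i$ and $a_j$ among its vertices. But that triangle lives in the intermediate curve immediately preceding that move, not in $\alpha$; the statement demands an embedded triangle in $\alpha$ itself. Your phrase ``by induction'' does not bridge this: the natural inductive hypothesis (for sequences of length $\le k$) would only hand you an embedded triangle in the curve after the first move, not in $\alpha$, so the induction does not close.

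The paper supplies exactly the missing step: after noting that some intermediate curve carries an embedded triangle with $a_i,a_j$ as vertices, it adds that ``this embedded triangle will also exist in $\alpha$ since triangle moves do not remove intersections and disks will remain disks throughout a homotopy.'' In other words, a single triangle move, realised as a homotopy of the curve, carries an embedded triangle to an embedded triangle with the same tracked vertices; iterating backwards pulls your triangle down to $\alpha$. You should either insert this persistence-of-triangles observation and use it to transport the triangle back across the earlier moves, or restructure the induction so that the inductive step explicitly does this transport across one move.
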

\begin{proof}

 Since a triangle move occurs in a disk that only intersects the curve in that triangle, a triangle move will only change the relative order of the vertices of the triangle. Therefore, if $a_i$ and $a_j$ have their relative order changed, then at some point in the homotopy $a_i$ and $a_j$ must be vertices of the same embedded triangle. This embedded triangle will also exist in $\alpha$ since triangle moves do not remove intersections and disks will remain disks throughout a homotopy.
\end{proof}

\subsection{Perturbed Geodesics}
We will use notation and a result from \cite{deGraafSchrijver1997} for our next proposition. The main result of \cite{deGraafSchrijver1997} is to put a collection of curves on a triangulizable surface into minimal position without creating bigons or monogons at any point in the homotopy. In order to accomplish this for hyperbolic surfaces, they prove that one can take curves in the class of $\beta^n$ and homotope them into a perturbed geodesic form without the creation of bigons or monogons. We will now introduce the notation to make the notion of a perturbed geodesic precise.

 We consider a geodesic curve, $\beta$, as a graph on the surface with intersection points as vertices. Now we take a polygonal decomposition of a neighborhood of $\beta$ in the following fashion: For each vertex we choose a convex polygon, $P_v$, containing $v$ in its interior, and for each edge $e$ a convex $4$-gon, $P_e$, such that any edge $e=uv$ is contained in $P_u \cup P_e \cup P_v$. We assume that the $P_v$ are mutually disjoint and that $P_e$ are mutually disjoint, while $P_v$ and $P_e$ intersect if and only if $v$ is incident with $e$. In which case, $P_v$ and $P_e$ intersect in a side both of $P_e$ and of $P_v$. Moreover, each side of any $P_v$ is equal to the intersection of $P_v$ with $P_e$ for some edge $e$ incident with $v$. We can also assume that if $v$ and $v'$ are the vertices incident with the edge $e$, then $P_v$ and $P_{v'}$ intersect $P_e$ in opposite sides of $P_e$. Let $\beta$ form the circuit $(v_0,e_1,v_1, \ldots ,e_t, v_t)$ in the graph, with $v_0=v_t$.

\begin{Prop}Let $\alpha$ be a representative of $\beta^n$ in minimal position. We can homotope $\alpha$ with a sequence of triangle moves such that there exists intersections  $b_1, b_2, \ldots ,b_{\lfloor \frac{n}{2} \rfloor}$ that are not pairwise connected along the curve by an embedded segment. Furthermore, we may assume that the $b_i$ are consecutively ordered.\end{Prop}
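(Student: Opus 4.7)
The plan is to apply the main result of \cite{deGraafSchrijver1997} to put $\alpha$ in perturbed geodesic form, and then read off the $b_i$ as diagonal intersections inside the polygon $P_v$ around a self-intersection $v$ of $\beta$. Since $\alpha$ is already in minimal position (and therefore has no bigon or monogon regions), the theorem of de Graaf--Schrijver lets me homotope $\alpha$ to the perturbed geodesic representative of $\beta^n$ via a sequence of Reidemeister moves during which no bigon or monogon is ever created; since none are ever present to begin with, every one of these moves is a triangle move. After this initial homotopy I work with $\alpha$ in perturbed geodesic form.

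Because the proposition is to be applied in Lemma~9 when $\beta$ is not simple, let $v$ be a self-intersection of (the geodesic representative of) $\beta$ and fix the polygon $P_v$ around it coming from the de Graaf--Schrijver setup. Inside $P_v$ the curve $\alpha$ consists of $n$ strands in each of the two tangent directions $d_1, d_2$ of $\beta$ at $v$; label them $L_1, \ldots, L_n$ and $R_1, \ldots, R_n$ in the order that $\alpha$ traverses them. Up to swapping $d_1$ and $d_2$, the $i$-th copy $\beta_i$ of $\beta$ inside $\alpha$ passes through $P_v$ first along $L_i$ and then along $R_i$, so the temporal order of strand-passages through $P_v$ is $L_1, R_1, L_2, R_2, \ldots, L_n, R_n$. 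Each $L_i$ meets each $R_j$ transversely in a single point of $P_v$, and I define
\[
b_i \;=\; L_i \cap R_i \qquad (i = 1, 2, \ldots, \lfloor n/2 \rfloor).
\]

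Both visits of $b_i$ occur during $\beta_i$'s traversal (first along $L_i$, then along $R_i$), with no other $b_k$ seen in between, so the cyclic order of the $b_i$'s along $\alpha$ is $b_1, b_1, b_2, b_2, \ldots, b_m, b_m$, which is the required consecutive ordering. For the embedded-segment condition: if $|i - j| \geq 2$, both arcs of $\alpha$ from $b_i$ to $b_j$ pass through some intermediate $b_k$ and so are not embedded; for adjacent $b_i, b_{i+1}$ the ``long'' arc contains all other $b_k$'s, while the ``short'' arc runs from the second visit of $b_i$ along the rest of $R_i$ in $P_v$ (crossing $L_k \cap R_i$ for each $k \neq i$), out of $P_v$, back into $P_v$ along $L_{i+1}$ (crossing $L_{i+1} \cap R_k$ for various $k$), and finally to $b_{i+1}$. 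Each of the listed crossings is a self-intersection of $\alpha$ in the interior of the short arc, so that arc is not embedded either.

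The main technical obstacle is marshaling \cite{deGraafSchrijver1997} carefully enough to produce the $L/R$ grid structure in $P_v$ together with the claimed temporal ordering of the strands, and to know that the homotopy leading to that form requires no bigon or monogon moves when starting from a minimal representative. Once this geometric picture at $P_v$ is in place, the combinatorial verifications above are essentially immediate.
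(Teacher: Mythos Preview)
Your overall strategy matches the paper's: invoke de Graaf--Schrijver to reach a perturbed-geodesic representative via triangle moves only, then read off the $b_i$ inside the polygon $P_v$ around a self-intersection $v$ of $\beta$. The gap is in your non-embeddedness argument for the adjacent pair $b_i,b_{i+1}$.

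You assert that the short arc from the second visit of $b_i$ (on $R_i$) to the first visit of $b_{i+1}$ (on $L_{i+1}$) is not embedded because it ``crosses $L_k\cap R_i$ for each $k\neq i$'' and ``$L_{i+1}\cap R_k$ for various $k$''. But those points are self-intersections of $\alpha$, not of the sub-arc: the sub-arc traverses each such point \emph{once}, along $R_i$ or along $L_{i+1}$, while the second visit by $\alpha$ lies on some $L_k$ or $R_k$ that is not part of the short arc. The only candidate double point on the short arc is $R_i\cap L_{i+1}$, and you have not checked that this point lies both on the portion of $R_i$ after $b_i=L_i\cap R_i$ and on the portion of $L_{i+1}$ before $b_{i+1}=L_{i+1}\cap R_{i+1}$; this depends on the spatial (not temporal) ordering of the parallel strands and on the directions of traversal, and can fail. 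Outside $P_v$ the short arc traces less than one full lap of $\beta$, so if $\beta$ has a single self-intersection (a figure-eight) that outside portion is embedded as well. Thus with your choice $b_i=L_i\cap R_i$ for $i=1,\dots,\lfloor n/2\rfloor$, the short arc between consecutive $b_i$'s can genuinely be embedded.

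The paper fixes exactly this point by taking $b_i$ from the $2i$-th lap rather than the $i$-th. Then any segment joining two of the $b_i$'s contains at least one \emph{complete} lap of $\beta$, hence a full $L$-strand and a full $R$-strand through $P_{v_0}$, and their crossing is a bona fide double point of the segment. This is why only $\lfloor n/2\rfloor$ points are produced; your choice of consecutive laps would (if it worked) give $n$ points, which is already a sign something is off.
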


\begin{proof} 

From the proof of the Proposition 14 in \cite{deGraafSchrijver1997}, we see that we can homotope $\alpha$ without creating bigons, such that is it contained in the polygonal neighborhood of $\beta$ where every intersection of $\alpha$ is contained in a $P_v$. Moreover, $\alpha$ traverses, in order, $P_{v_0}, P_{e_1}, P_{v_1}, \ldots, P_{e_t}, P_{v_t}$, $n$ times. Each $P_{v_i}$ occurs twice in the circuit since it is an intersection of $\beta$. We let the $b_i$ be intersections formed by $\alpha$ entering $P_{v_0}$ in the same circuit. In particular, we let $b_i$  be obtained from the $2i\textsuperscript{th}$ circuit which yields $\lfloor \frac{n}{2} \rfloor$ possible $b_i$. We note that any segment connecting $b_i$, $b_j$ will traverse $P_{v_0}, P_{e_1}, P_{v_1}, \ldots, P_{e_t}, P_{v_t}$ and therefore will not be embedded. Also since the $b_i$ occur in the same circuit, they will be consecutively ordered.
\end{proof}

\subsection{Proof of Lemmas 9 \& 10}
\begin{Lemma} If $\beta$ is a primitive non-embedded curve, then $\beta^n$ is not the union of two embedded arcs for $n\geq12$. \end{Lemma}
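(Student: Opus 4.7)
The plan is to assume for contradiction that $\beta^n=A_1\cup A_2$ is a union of two embedded arcs with $n\geq 12$, and to produce many consecutively ordered self-intersections in a two-colorable representative which force a pigeonhole contradiction.

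First, fix a representative of $\beta^n$ realizing this decomposition: it is two-colorable with endpoints $c_1,c_2\in\partial S_g^1$. Apply Proposition 6 to reduce to a two-colorable representative $\alpha'$ in which all bigons but possibly $K_\alpha$ have been removed; since monogons do not affect two-colorability, we may further assume $\alpha'$ has no monogons. Let $\alpha'_0$ denote $\alpha'$ with the bigon $K_\alpha$ removed if present; then $\alpha'_0$ is free of bigons and monogons, and hence in minimal position. Apply Proposition 8 to $\alpha'_0$ to obtain, via a sequence of triangle moves, a perturbed geodesic representative $\tilde\alpha$ with distinguished self-intersections $\tilde b_1,\dots,\tilde b_{\lfloor n/2\rfloor}$ that are consecutively ordered and pairwise unjoined by any embedded sub-arc of the curve. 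Because $n\geq 12$, at least six such $\tilde b_i$ are produced.

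Next, transport the $\tilde b_i$ back to $\alpha'$, keeping their consecutive ordering. Since triangle moves are reversible, $\alpha'_0$ is obtained from $\tilde\alpha$ by a sequence of triangle moves, so by Proposition 7 the relative order of $\tilde b_i$ and $\tilde b_j$ can change under this sequence only if there is an embedded triangle in $\tilde\alpha$ having $\tilde b_i$ and $\tilde b_j$ as vertices. Any such triangle would provide an embedded sub-arc of the curve joining these intersections, contradicting the conclusion of Proposition 8. Hence the $\tilde b_i$ remain consecutively ordered in $\alpha'_0$; reintroducing $K_\alpha$ adds only two new self-intersections (the bigon corners), distinct from the $\tilde b_i$ and unable to alter their relative order. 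Thus the $\tilde b_i$ appear as consecutively ordered self-intersections of $\alpha'$.

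Finally, a pigeonhole argument closes the proof. Because $\alpha'=A_1\cup A_2$ with $A_1,A_2$ embedded, every self-intersection of $\alpha'$ — in particular each $\tilde b_i$ — is a transverse crossing of $A_1$ with $A_2$, so one visit to $\tilde b_i$ lies on $A_1$ and the other on $A_2$. Equivalently, going from one visit of $\tilde b_i$ to the other through the inner arc (the sub-arc between the two visits containing no other $\tilde b$-visit) must cross exactly one of $c_1,c_2$. Consecutive ordering makes the six inner arcs pairwise disjoint on $\alpha'$, so the two points $c_1,c_2$ can populate at most two of them, contradicting the requirement that each of the six contain a $c_j$. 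The main obstacle I expect is the transport step: confirming that the $\tilde b_i$ truly persist as consecutively ordered intersections of $\alpha'$, which requires invoking Proposition 7 in the reverse direction and treating the single possible bigon $K_\alpha$ with care.
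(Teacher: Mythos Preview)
Your overall strategy matches the paper's: two-color, strip bigons via Proposition~5, pass to the perturbed-geodesic form via Proposition~8 to produce many consecutively ordered intersections, transport them back, and then use a pigeonhole on the color-change points $c_1,c_2$. The final pigeonhole step is fine (indeed three consecutively ordered intersections already suffice, as in the paper).

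The gap is exactly where you suspected: the bigon step. Your sentence ``reintroducing $K_\alpha$ adds only two new self-intersections (the bigon corners), distinct from the $\tilde b_i$ and unable to alter their relative order'' is not justified. After Proposition~5 the curve has at most one bigon $K_\alpha$, but nothing prevents other strands of the curve from crossing through the bigon region; such strands do not create smaller bigons, so they are not ruled out. In that case the passage $\alpha'\to\alpha'_0$ is not a single clean bigon move: one must first push those strands out with triangle moves, and reversing this introduces triangle moves whose vertices can include the $\tilde b_i$. So Prop.~7 does not immediately protect all six $\tilde b_i$ at this stage.

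The paper's fix is the one idea you are missing. Rather than claiming all six $\tilde b_i$ survive the bigon step, the paper keeps only every other one, $b_1,b_3,b_5$, and uses $b_2,b_4,b_6$ as buffers: since $b_1,\ldots,b_6$ are consecutively ordered just before the bigon step, each sub-arc of the curve joining two of $b_1,b_3,b_5$ (in the relevant direction) contains both visits of one of $b_2,b_4,b_6$ and hence is not embedded. Therefore no embedded triangle can have two of $b_1,b_3,b_5$ as vertices, and Prop.~7 guarantees their relative order is preserved through the induced triangle moves of the bigon removal. Three consecutively ordered intersections then force three color-change points, the desired contradiction. If you insert this ``use the even-indexed $b_i$ to block embedded triangles among the odd-indexed ones'' argument in place of your unjustified sentence, your proof becomes essentially identical to the paper's.
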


\begin{proof}

 If a curve can be written as the union of two embedded arcs, then that decomposition will result in a two coloring of the curve with $c_1, c_2 \in \partial S_{g}^{1}$. Let $\gamma$ be a representative of $\beta^n$ that is two colored in such a way. We can remove all bigons, other than possibly $K_{\gamma}$, to get the two colored curve $\gamma'$ (Proposition 5).

 We now homotope $\gamma'$ into a neighborhood of the geodesic representative of $\beta$ in the following manner. Our first step will be to remove $K_{\gamma'}$, if it exists; this will induce triangle moves. We then use Proposition 8 to finish homotopy using only triangle moves. Let $\gamma''$ be the homotoped $\gamma'$. If $n\geq12$, then there exist consecutively ordered intersections, $b_1, b_2, \ldots ,b_6$, on $\gamma''$. 

Now we will show that $b_1,b_3,b_5$ are consecutively ordered in $\gamma'$ by following the homotopy from $\gamma''$. The $b_i$ are not pairwise vertices of an embedded triangle in $\gamma''$ because any edge connecting two of them will not be embedded. By Proposition 7, the $b_i$ are still consecutively ordered after applying the triangle moves coming from Proposition 8. If necessary, we will finish the homotopy with the bigon move associated to $K_{\gamma'}$ and then its induced triangle moves. The segments between $b_1,b_3,b_5$ will each contain both parts of the intersection of either $b_2$, $b_4$, or $b_6$ since the $b_i$ are consecutively ordered. Therefore, $b_1, b_3, b_5$ are not pairwise vertices of an embedded triangle, and hence they are consecutively ordered after the induced triangle moves (Proposition 7). 

For any intersection $a_k=(k_1,k_2)$, we note that there must exist a $c_i$ on the curve between $k_1$ and $k_2$; otherwise, $a_k$ will be an intersection in the same color. Therefore, $b_1, b_3, b_5$ being consecutively ordered implies there exists $c_1, c_2, c_3$ which is a contradiction to $\gamma'$ being two colorable.
\end{proof}

\begin{Lemma} Let $\beta$ be an embedded curve on $S_g^1$ which is not homotopic to $\partial S_g^1$. If $\beta^n$ is the union of two embedded arcs, $\alpha$ and $\gamma$, then $d(\alpha, \gamma)\leq2$ in $\mathcal{A}(S_g^1)$. \end{Lemma}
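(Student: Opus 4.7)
The plan is to exhibit an essential embedded arc $\delta \subset S_g^1$ that is disjoint from both $\alpha$ and $\gamma$; once produced, this gives $d(\alpha,\delta) \leq 1$ and $d(\delta,\gamma) \leq 1$ in $\mathcal{A}(S_g^1)$, and hence $d(\alpha,\gamma) \leq 2$.

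To construct $\delta$, first I would build a convenient subsurface containing $\beta$. Fix a closed annular neighborhood $A$ of $\beta$ disjoint from $\partial S_g^1$, and a small arc $I \subset \partial S_g^1$ containing both $c_1$ and $c_2$. Connect $A$ to $I$ by an embedded rectangle $R$ whose two short sides are $I$ and a sub-arc of $\partial A$, chosen so that $R$ is disjoint from $\beta$, and set $U := A \cup R$. Then $U$ is topologically an annulus with $\pi_1(U) = \langle \beta \rangle$, and $U \cap \partial S_g^1 = I$.

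The key technical step, which I expect to be the main obstacle, is to show that both $\alpha$ and $\gamma$ can be isotoped rel $\{c_1, c_2\}$ into $U$. The idea is that $\beta^n$ has a standard realization inside $U$ as a spiral loop based on $\partial S_g^1$ that enters $A$ through $R$, winds $n$ times around the core $\beta$, and exits through $R$; cutting this spiral at $c_1$ and $c_2$ produces two embedded arcs in $U$, winding $a$ and $n-a$ times around $\beta$ for some integer $a$, whose union represents $\beta^n$. I would then argue that any decomposition $(\alpha, \gamma)$ of $\beta^n$ into two embedded arcs with common endpoints $\{c_1, c_2\}$ is, up to ambient isotopy of $S_g^1$ fixing $\partial S_g^1$, such a standard pair. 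The relation $[\alpha] = \beta^n [\gamma]$ in the relative fundamental group, combined with the fact that each of $\alpha, \gamma$ is individually embedded (which rules out relative classes with large intersection with $\beta$), should force the relative homotopy classes of $\alpha$ and $\gamma$ to lie in the image of $\pi_1(U) \to \pi_1(S_g^1)$.

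Given this, the remainder is straightforward. Because $g \geq 2$ and $\beta$ is not homotopic to $\partial S_g^1$, the complement $S_g^1 \setminus U$ is a proper subsurface of nontrivial topology whose boundary meets $\partial S_g^1$ in the nonempty arc $\partial S_g^1 \setminus I$. It therefore contains an essential embedded arc $\delta$ with both endpoints on $\partial S_g^1 \setminus I$. Since $\delta \subset S_g^1 \setminus U$ and $\alpha, \gamma \subset U$, the arc $\delta$ is disjoint from both $\alpha$ and $\gamma$, completing the argument.
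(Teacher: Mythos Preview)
Your overall strategy matches the paper's: build a subsurface containing both $\alpha$ and $\gamma$, then find an essential boundary arc disjoint from both. The difference, and the place where your argument breaks, is the shape of that subsurface.

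You take $U = A \cup R$ to be an annulus $A$ around $\beta$ together with a \emph{single} rectangle $R$ reaching $\partial S_g^1$, and then assert that any pair $(\alpha,\gamma)$ can be isotoped into $U$. This is not true in general. Write $\alpha \simeq h_1 \cdot \beta^{a} \cdot \bar h_2$ and $\gamma \simeq h_2 \cdot \beta^{\,n-a} \cdot \bar h_1$, where $h_i$ is a path in $S_g^1 \setminus N_\beta$ from $c_i \in \partial S_g^1$ to $\partial N_\beta$. The relative homotopy classes of $h_1$ and $h_2$ are \emph{not} determined by $\beta$ or by the relation $\alpha\bar\gamma \simeq \beta^n$; the complement $S_g^1 \setminus N_\beta$ has nontrivial topology, so there are many non-isotopic choices. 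When $h_1$ and $h_2$ are not parallel, the minimal subsurface carrying $\alpha\cup\gamma$ is $N(h_1)\cup N_\beta \cup N(h_2)$, which has Euler characteristic $-1$ (a pair of pants, say), whereas your $U=A\cup R$ is an annulus. No ambient isotopy can take one to the other, so your ``standard form'' claim fails, and your $\pi_1(U)$ argument cannot force the classes of $\alpha,\gamma$ into $U$.

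The paper's fix is simple and close to what you wrote: instead of pre-choosing $U$, build the neighborhood \emph{from} the given arcs. One represents $\alpha$ and $\gamma$ via two approach paths $h_1,h_2$ as above, and then uses embeddedness of $\alpha$ and $\gamma$ to show that each $h_i$ is embedded, disjoint from $N_\beta$, and disjoint from the other $h_j$. The region $N(h_1)\cup N_\beta\cup N(h_2)$ then contains both $\alpha$ and $\gamma$, and its frontier in $S_g^1$ consists of arcs (and possibly a copy of $\beta$); since $\beta$ is not homotopic to $\partial S_g^1$, at least one of these arcs is essential and serves as your $\delta$. So your final step is exactly right; only the middle step needs the two-tube neighborhood in place of your one-tube $U$.
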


\begin{proof} 

Since $\alpha \cup \gamma$ is homotopic to $\beta^n$ as a curve, we can represent $\alpha$ and $\gamma$ as $\beta^n$ along with two paths, $h_1$ and $h_2$, from $\partial S_g^1$ to $\beta^n$. In particular, $\alpha$ is $h_1$, then a part of $\beta^n$, then $\bar{h}_2$, and $\gamma$ is $h_1$, then the rest of $\beta^n$, then $\bar{h}_2$. Let $\beta^n$ be contained in an annular neighborhood, $N_{\beta}$, of $\beta$ such that $h_1$ and $h_2$ terminate on $\partial N_{\beta}$. We claim $h_i$ does not intersect $N_{\beta}$, or more precisely that $h_i$ is a single arc on $S_g^1 \setminus N_{\beta}$. If this were not the case, then $h_i$ restricted to $N_{\beta}$ would witness an intersection $h_i$ with either $\alpha$ or $\gamma$. Whether $h_i$ intersects $\alpha$ or $\gamma$ depends on the homotopy class of $h_i$, and so we have a non-removable self intersection of $\alpha$ or $\gamma$. By the same token, $h_1$ and $h_2$ do not intersect themselves or each other.    

Consider the neighborhood of $\alpha \cup \gamma$ comprised from tubular neighborhoods of $h_1$, $h_2$ together with $N_{\beta}$. The boundary of this neighborhood will be a collection of arcs and possibly the curve $\beta$. Any curves other than $\beta$ would imply an intersection of $h_1$ or $h_2$. The arcs are clearly disjoint from $\alpha$ and $\gamma$, and since $\beta$ is not homotopic to $\partial S_g^1$, we have at least one of these arcs is non-trivial.
\end{proof}

\section{Free Splittings}
   The main focus of this section will be proving that $\phi$ is coarsely well defined (Lemma 19). Recall that $\phi(T)$ for $T \in FS_{2g}$ is the set of arcs on $S_g^1$ that arise as the preimage of a particular marked point, $p$, on a total space, $X_T$. These preimages will range over different homotopy equivalences, $\lbrace F_i \rbrace_{i \in I}$, between $S_g^1$ and $X_T$, and arcs resulting from different $F_i$ may intersect. We wish to show that the set of arcs for any splitting has bounded diameter in $\mathcal{A}(S_g^1)$ depending only on the genus.

   An \textit{arc system} denotes the collection of arcs contained in $F_i^{-1}(p)$ for a single equivalence. When we discuss arcs, we will consider them as fixed endpoint homotopy classes. Note that it is possible to have two freely homotopic arcs as distinct members of an arc system; a \textit{parallel family} is a maximal collection of these freely homotopic arcs in an arc system.

   Rather than considering all of the arcs comprising an arc system, it will be sufficient to gain an understanding of a large parallel family in the arc system. Our main tool to restrict these large families will use the word represented by the boundary loop, and the following property of parallel families: A parallel family of arcs will form disks with segments of the boundary; the arcs are mapped to $p$ by the $F_i$, so the boundary segments at each side of a parallel family must be mapped to inverse words. Since the $F_i$ agree on $\partial S_g^1$, we can then use inverse pairs that appear in the word represented by the boundary loop to limit parallel families over all $F_i$. In particular, we will think of such inverse pairs as pieces of the boundary that can potentially be connected by a parallel family. Large parallel families will require large inverse pairs, and, together with the results of Section 2, will give us a bound on the number of $2$-balls needed to contain $\phi(T)$. Finally, we will show any two points in $\phi(T)$ are connected by a path in $\phi(T)$ which we construct via the homotopy between $F_i$'s.    

\subsection{Preliminaries}

We now set up notation for viewing the boundary as composed of a finite number of pieces that can possibly be connected with parallel families of arcs. Recall that $F_{\partial S_g^1}$ is the restriction of the $F_i$ to the $\partial S_g^1$, and hence we can think of $F_{\partial S_g^1}^{-1}(p)$ as the possible endpoints of the arcs.

\begin{Def} An endpoint set, $\Lambda$, is a subset of $F_{\partial S_g^1}^{-1}(p)$ that is connected in the sense that there exists a segment on $\partial S_g^1$ that contains $\Lambda$, but no other points in $F_{\partial S_g^1}^{-1}(p)$. The minimal segment that contains $\Lambda$ is mapped to a loop based at $p$ by the $F_i$. The subword associated to $\Lambda$, $w_{\Lambda}$, is the word represented by this loop with orientation inherited from $\partial S_g^1$. \end{Def}

We think of the points of $\Lambda$ as half the endpoints of a parallel family of arcs.

\begin{Def} A partial arc system, $(\Lambda,\Upsilon)$, is a pair of two disjoint endpoint sets, $\Lambda$ and $\Upsilon$, such that $w_{\Lambda} = w_{\Upsilon}^{-1}$. \end{Def}

Note $\Lambda$ and $\Upsilon$ have the same cardinality; their minimal containing segments are mapped to inverse words and hence cross $p$ the same number of times. 

\begin{Prop} The endpoints of every parallel family in an an arc system form a partial arc system. \end{Prop}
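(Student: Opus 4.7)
The plan is to start with a parallel family $\{a_1, \ldots, a_k\}$ of arcs in an arc system $F_i^{-1}(p)$, enumerated in the order they appear across a tubular neighborhood diffeomorphic to a rectangle $[0,1]^2$ with the arcs as vertical segments. Let $\Lambda = \{x_1, \ldots, x_k\}$ and $\Upsilon = \{y_1, \ldots, y_k\}$ denote the two sets of arc endpoints, one on each horizontal side of the rectangle. For each $i$, the arcs $a_i, a_{i+1}$ together with sub-segments $[x_i, x_{i+1}], [y_i, y_{i+1}] \subset \partial S_g^1$ bound a disk $D_i$ in $S_g^1$. To establish the proposition, three things must be checked: that $\Lambda$ and $\Upsilon$ are endpoint sets, that they are disjoint, and that $w_\Lambda = w_\Upsilon^{-1}$.

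The key step is to show that the sub-segment $[x_1, x_k]$ of $\partial S_g^1$ contains no points of $F_{\partial S_g^1}^{-1}(p)$ beyond $\Lambda$ itself, and analogously for $[y_1, y_k]$. It suffices to rule out stray preimages in each gap $(x_i, x_{i+1})$ separately. A stray preimage $z \in (x_i, x_{i+1})$ would be the endpoint of some arc $b$ in the same arc system. The arcs of $F_i^{-1}(p)$ are pairwise disjoint as components of a $1$-manifold, so $b$ cannot meet $a_i$ or $a_{i+1}$; hence $b$ is trapped in $D_i$ and must exit through one of the two boundary sub-segments of $D_i$. If $b$ exits on the $\Lambda$ side, then $b$ is boundary-parallel and can be removed by a small homotopy of $F_i$ that reduces $|F_i^{-1}(p) \cap \partial S_g^1|$ by two, contradicting the minimality built into the definition of the arc system. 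If $b$ exits on the $\Upsilon$ side, then $b$ is freely homotopic to and disjoint from the family, forcing $b$ to belong to the family by maximality; yet its endpoint lies strictly between $x_i$ and $x_{i+1}$, a contradiction. Hence $\Lambda$ and $\Upsilon$ are endpoint sets, and they are disjoint since they occupy distinct sub-arcs of $\partial S_g^1$.

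For the word equality, I would apply $F_i$ to the boundary of each $D_i$. The resulting loop is null-homotopic in $X_T$, and reading off $F_i(\partial D_i)$ yields $u_i = v_i$ in $\pi_1(X_T, p)$, where $u_i$ and $v_i$ are the words associated to $[x_i, x_{i+1}]$ and $[y_i, y_{i+1}]$ read in the $\partial S_g^1$-orientation. Because $S_g^1$ is orientable, the induced orientation on $\partial S_g^1$ traverses the two sides of the parallel family in opposite senses, so $w_\Lambda = u_1 \cdots u_{k-1}$ while $w_\Upsilon = v_{k-1}^{-1} \cdots v_1^{-1} = w_\Lambda^{-1}$, as required.

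The main obstacle will be the middle paragraph: both escape routes for the hypothetical stray arc $b$ must be eliminated simultaneously, which requires the minimality in the definition of an arc system and the maximality of the parallel family to be invoked in tandem. Some care is also needed to verify that a boundary-parallel arc can genuinely be eliminated by an admissible homotopy of $F_i$ without disturbing the rest of the configuration or lifting $F_i$ out of the class of homotopy equivalences transverse to $p$.
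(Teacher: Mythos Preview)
Your argument is correct and follows essentially the same route as the paper's: trap a hypothetical stray arc inside a disk bounded by family members together with boundary segments, then read the inverse-word relation from the null-homotopic image of that disk's boundary. The packaging differs only slightly---you use the consecutive disks $D_i$ where the paper assembles a single maximal disk---and you are more explicit than the paper in invoking the minimality of $|F_i^{-1}(p)\cap\partial S_g^1|$ to exclude boundary-parallel arcs (the paper dismisses these with the single word ``non-trivial''); your orientation bookkeeping in the final paragraph is a bit loose (whether the disk relation reads $u_i=v_i$ or $u_i=v_i^{-1}$ depends on conventions you do not fully pin down), but the conclusion $w_\Lambda=w_\Upsilon^{-1}$ comes out right either way.
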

\begin{proof} If the parallel family only contains one arc, it is clear that the two endpoints form a partial arc system. If there are at least two arcs in the parallel family, then these arcs, along with segments of $\partial S_g^1$, will form disks. We claim there is a maximal disk for a parallel family. There clearly exists a maximal disk for two arcs, and if we induct on the number of arcs, we see that the new arc is either contained in or disjoint from the disk. In the latter case, we can extend the disk via a homotopy between the new arc and an arc contained in the disk. The segments of $\partial S_g^1$ that in part form the maximal disk will be our minimal containing segments. Since the two arcs that in part form the maximal disk are mapped by $F_i$ to $p$, one of the minimal containing segments must be mapped to the inverse of the other in order for the circle bounding the disk to have a null-homotopic image.

If there was a point of $F_{\partial S_g^1}^{-1}(p)$ on one of the segments that was not an endpoint of our parallel family, it would have to be an endpoint of some other arc. In order for that arc to be non-trivial and not homotopic to arcs in the parallel family, it would have to leave the disk which would result in two arcs of the arc system intersecting. Therefore, the endpoints of the parallel family form a partial arc system. 
\end{proof}

\begin{Def} Let $(\Lambda,\Upsilon)$ be a partial arc system which arises from a parallel family which contains an arc, $\alpha$. The arc associated to $(\Lambda,\Upsilon)$ is the free homotopy class of $\alpha$. \end{Def}

   We note that not every partial arc system will arise from a parallel family, and so not every partial arc system will have an associated arc.

\begin{Prop} The associated arc of a partial arc system, $(\Lambda, \Upsilon)$, is unique if it exists. \end{Prop}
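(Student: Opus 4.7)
The plan is to show that any two parallel families realizing the same partial arc system produce freely homotopic arcs. Suppose $\alpha$ lies in a parallel family $\mathcal{F} \subset F_i^{-1}(p)$ and $\alpha'$ lies in a parallel family $\mathcal{F}' \subset F_{i'}^{-1}(p)$, both with associated endpoint data $(\Lambda, \Upsilon)$. First I would observe that the maximal disk bounded by a parallel family (constructed in the proof of Proposition 13) forces its arcs to pair the points of $\Lambda$ with the points of $\Upsilon$ in the unique non-crossing way compatible with the orientations inherited from $\partial S_g^1$; this pairing depends only on $(\Lambda, \Upsilon)$, so both $\mathcal{F}$ and $\mathcal{F}'$ induce the same matching. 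After sliding endpoints along $\partial S_g^1$ within $I_\Lambda$ and $I_\Upsilon$ (a move inside the free isotopy class), I may assume that $\alpha$ and $\alpha'$ share endpoints $x \in \Lambda$ and $y \in \Upsilon$.

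Next, I would use that both $F_i$ and $F_{i'}$ are homotopy equivalences from $S_g^1$ to the aspherical total space $X_T = K(F_{2g},1)$, that they agree on $\partial S_g^1$ with $F_{\partial S_g^1}$, and that they induce the same marking isomorphism on $\pi_1$ (a consequence of the fixed $F_{2g}$-action on $T$). By standard obstruction theory for maps into a $K(G,1)$---the successive obstructions to extending a rel-$\partial$ homotopy live in $H^k(S_g^1, \partial S_g^1; \pi_k(X_T))$ for $k \geq 2$, all of which vanish by asphericity---there is a homotopy $H \colon S_g^1 \times I \to X_T$ from $F_i$ to $F_{i'}$ that is stationary on $\partial S_g^1$, and in particular fixes the endpoints $x$ and $y$.

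To conclude, consider the loop $\ell = \alpha \cdot \bar{\alpha}'$ based at $x$. Since $\alpha \subset F_i^{-1}(p)$, the path $F_i \circ \alpha$ is constant at $p$; meanwhile, $H$ supplies a rel-endpoint homotopy from $F_i \circ \alpha'$ to $F_{i'} \circ \alpha' \equiv p$, so $F_i \circ \alpha'$ is null-homotopic rel endpoints in $X_T$. Hence $F_i \circ \ell$ is null-homotopic, and since $F_i$ induces an isomorphism on $\pi_1$, so is $\ell$ in $S_g^1$. Therefore $\alpha$ and $\alpha'$ are homotopic rel endpoints, so their free homotopy classes coincide. The main obstacle is the second step: verifying that the $F_i$'s can all be chosen to induce the same $\pi_1$-isomorphism---which requires a careful reading of the marking convention for $T$---and then promoting the resulting free homotopy to one rel $\partial S_g^1$ via obstruction theory.
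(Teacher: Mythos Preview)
Your argument is correct and rests on the same core idea as the paper's: once the non-crossing pairing of $\Lambda$ with $\Upsilon$ is fixed (step one, which you and the paper handle identically), uniqueness reduces to showing that two arcs $\alpha \in F_i^{-1}(p)$ and $\alpha' \in F_j^{-1}(p)$ sharing endpoints are homotopic rel endpoints, and this ultimately hinges on the $F_i$ all inducing the fixed marking isomorphism on $\pi_1$. You flag this marking issue explicitly; the paper leaves it implicit in the standing identification $\pi_1(S_g^1)\cong F_{2g}$ and the construction of $X_T$ as a $K(F_{2g},1)$.

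Where you diverge is in packaging: you invoke obstruction theory to produce a full homotopy $H\colon S_g^1\times I\to X_T$ rel $\partial S_g^1$ from $F_i$ to $F_j$, then use $H$ to transport $F_i\circ\alpha'$ to the constant path. The paper bypasses this machinery entirely: it closes each arc with the same boundary segment $\sigma$ to form loops $\sigma\cdot\alpha$ and $\sigma\cdot\alpha'$, observes that $F_i(\sigma\cdot\alpha)$ and $F_j(\sigma\cdot\alpha')$ are literally the same loop $F_{\partial S_g^1}(\sigma)$ in $X_T$ (since both arcs map to the point $p$), and then pulls back via the common $\pi_1$-isomorphism. Your route is more robust in that it makes every hypothesis visible, but the paper's is shorter and avoids any appeal to obstruction groups---the only fact needed is that a homotopy equivalence is $\pi_1$-injective.
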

\begin{proof}Since $S_g^1$ is orientable, there is exactly one way to pair the points of $\Lambda$ and $\Upsilon$ with non intersecting, freely homotopic arcs. Let $\alpha \in  F_i^{-1}(p)$ and $\alpha' \in F_j^{-1}(p)$ be arcs that share endpoints in $F_{\partial S_g^1}^{-1}(p)$. We claim $\alpha$ and $\alpha'$ are homotopic: Consider the two loops formed by taking a segment of $\partial S_g^1$ along with either $\alpha$ or $\alpha'$. These loops will have homotopic images since $F_i(\alpha)=F_j(\alpha')=p$ and $F_i$ and $F_j$ agree on $\partial S_g^1$. Therefore, there is exactly one way to pair the points of $\Lambda$ and $\Upsilon$ and exactly one homotopy class of arcs for a pair of endpoints. 
\end{proof}

\subsection{Proof of Main Result}
   If we fix one of the endpoint sets of a partial arc system and vary the second endpoint set, we will obtain distinct partial arc systems. For these endpoint set pairs to be partial arc systems, they must of course have inverse subwords. In the case that these varied endpoint sets intersect, we can then say that the subword will be roughly periodic:

\begin{Prop} Let $(\Lambda, \Upsilon)$ and $(\Lambda, \Theta)$ be partial arc systems. If $|\Upsilon \setminus \Theta| \leq \frac{1}{n}(|\Lambda|-1)$, then $w_{\Upsilon}=w_{\Theta}= w^nv$ where $v$ is a prefix of $w \in F_{2g}$. 
 \end{Prop}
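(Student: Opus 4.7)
The first equality $w_\Upsilon = w_\Theta$ is immediate from the definition of a partial arc system, since both words equal $w_\Lambda^{-1}$. The substance of the proposition is the periodic structure of this common word, and my plan is to extract it by comparing the two nearly-identical readings of that word along $\partial S_g^1$.

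Enumerate $F_{\partial S_g^1}^{-1}(p)$ cyclically along $\partial S_g^1$ as $q_0, q_1, \ldots, q_{N-1}$, and let $u_j \in F_{2g}$ denote the word read off by the sub-loop from $q_j$ to $q_{j+1}$ under any $F_i$. Since each endpoint set is a contiguous run of $q_i$'s (its minimal containing segment contains no other preimages of $p$), I may write $\Upsilon = \{q_a, \ldots, q_{a+m-1}\}$ and $\Theta = \{q_b, \ldots, q_{b+m-1}\}$ with $m = |\Lambda|$ and (WLOG) $a \leq b$. The hypothesis then becomes the shift bound $k := b - a = |\Upsilon \setminus \Theta| \leq (m-1)/n$, and
\[
w_\Upsilon = u_a u_{a+1} \cdots u_{a+m-2}, \qquad w_\Theta = u_{a+k} u_{a+k+1} \cdots u_{a+k+m-2}.
\]

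The central step is to upgrade the equality $w_\Upsilon = w_\Theta$ in $F_{2g}$ to the termwise identities $u_{a+i} = u_{a+k+i}$ for $0 \leq i \leq m-2$. The minimality of $|F_i^{-1}(p) \cap \partial S_g^1|$ forces each $u_j$ to be nontrivial and each crossing of $p$ to be transverse; otherwise one could homotope $\partial S_g^1$ across $p$ to cancel a pair of crossings, contradicting minimality. With these tightness properties, the resulting word is in reduced normal form relative to the splitting, viewed as a free product or trivial-edge-group HNN decomposition of $F_{2g}$. Uniqueness of this normal form then yields the termwise identities. I expect this to be the main obstacle, as it requires packaging the geometric minimality built into the $F_i$ cleanly into a statement about normal forms.

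Once termwise equality is in hand, set $w = u_a u_{a+1} \cdots u_{a+k-1}$. The bound $nk \leq m-1$ then displays $w_\Upsilon = w^n v$, where $v = u_{a+nk} \cdots u_{a+m-2}$ is the remaining string; the periodicity identifies $v$ with an initial segment of the periodic pattern $w^\infty$. Replacing $w$, if necessary, by an appropriate integer power of the minimal period (chosen within the range permitted by the shift bound) ensures that this leftover fits strictly inside one copy, so that $v$ is genuinely a prefix of $w$, as claimed.
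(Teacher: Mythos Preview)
Your proof is correct and follows the same route as the paper: write $w_\Lambda^{-1}$ letter-by-letter along the boundary, use the overlap of $\Upsilon$ and $\Theta$ together with equality in $F_{2g}$ to force the termwise periodicity $x_i = x_{i+k}$, and read off the form $w^n v$. You are in fact more explicit than the paper about why termwise equality holds (uniqueness of normal form from minimality of the boundary word, which the paper only invokes implicitly); your closing remark about replacing $w$ by a power of the minimal period is an unnecessary and somewhat muddled attempt to tidy a technicality that the paper itself leaves loose, but the core argument matches.
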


\begin{proof} 

Let $w_{\Lambda}^{-1}=w_{\Upsilon}=w_{\Theta}= x_1x_2 \ldots x_l$ where the $x_i$ are words associated to sub-endpoint sets which contain exactly two points. As $\Upsilon$ and $\Theta$ overlap as endpoint sets, it follows that $x_i=x_{i+k}$ for $k= |\Upsilon \setminus \Theta|$ and $i \leq l-k$. Therefore, $w_{\Upsilon}=w_{\Lambda}= w^nv$ where $w=x_1x_2 \ldots x_ k$ and $v=x_1x_2 \ldots x_m$ for $m <k$.
\end{proof}

\begin{Prop} Let $(\Lambda, \Upsilon_0)$ be a partial arc system and consider all partial arc systems, $(\Lambda,\Upsilon_i)$, such that $|\Upsilon_0 \cap \Upsilon_i| \geq \frac{1}{2}|\Upsilon_0|$. There are either at most $12$ arcs  associated to the $(\Lambda, \Upsilon_i)$, or the associated arcs can be contained in a $2$-ball in $\mathcal{A}(S_g^1)$.  \end{Prop}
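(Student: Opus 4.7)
The plan is to use Proposition 16 to extract strong periodicity in the boundary word around $\Upsilon_0$, produce a primitive closed curve $\beta$ on $S_g^1$ realizing this period, and show each associated arc $\alpha_i$ differs from a fixed $\alpha_0$ by a power of $\beta$. The dichotomy in the proposition then reduces to whether $\beta$ is embedded, resolved by Lemma 10 and Lemma 9 respectively.

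First, since partial arc systems have endpoint sets of matching cardinality, $|\Upsilon_i|=|\Lambda|=|\Upsilon_0|$, so the overlap hypothesis rewrites as $|\Upsilon_0\setminus\Upsilon_i|\leq |\Lambda|/2$. Applying Proposition 16 (and passing to the minimal shift across all $i$) yields $w_{\Upsilon_0}=w^mv$ for a minimal period word $w$. Let $\beta$ be the primitive closed curve on $S_g^1$ whose conjugacy class is the primitive root of $w$. Each $\Upsilon_i$ is then a cyclic shift of $\Upsilon_0$ by an integer multiple $k_i\cdot|w|$ along the boundary word, and distinct indices correspond to distinct nonzero $k_i$.

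Next I would pick matched representatives: fix $\lambda\in\Lambda$ and let $\alpha_0,\alpha_i$ be the arcs through $\lambda$ in the parallel families of $(\Lambda,\Upsilon_0)$ and $(\Lambda,\Upsilon_i)$, with other endpoints $\upsilon_0,\upsilon_i$. Let $\sigma_i$ be the boundary arc from $\upsilon_0$ to $\upsilon_i$ tracing the shifted subword. Because $F_0,F_i$ agree on $\partial S_g^1$ and collapse $\alpha_0,\alpha_i$ to $p$, the loop $\alpha_0\cdot\sigma_i\cdot\alpha_i^{-1}$ maps in $X_T$ to the loop represented by $w_{\sigma_i}=w^{k_i}$, so as a free homotopy class on $S_g^1$ it is $\beta^{k_i}$. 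Pushing $\sigma_i$ off $\partial S_g^1$ into the interior and cutting at $\lambda$ and at an interior midpoint of the pushed segment presents this representative of $\beta^{k_i}$ as the union of the two arcs $\alpha_0\cup\tfrac12\sigma_i$ and $\alpha_i\cup\tfrac12\sigma_i$.

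Finally I would split on the geometry of $\beta$. If $\beta$ is embedded and not homotopic to $\partial S_g^1$, Lemma 10 applied to each pair $(\alpha_0,\alpha_i)$ gives $d(\alpha_0,\alpha_i)\leq 2$ in $\mathcal{A}(S_g^1)$, so every associated arc lies in a single $2$-ball about $\alpha_0$. If $\beta$ is non-embedded, Lemma 9 forces $|k_i|<12$ for every $i$; combined with the observation that $|k_i|$ determines the associated arc class (shifts of opposite sign producing freely homotopic arcs after sliding boundary endpoints), this caps the number of distinct associated arcs at $12$.

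The main obstacle will be the middle step's claim that $\beta^{k_i}$ is actually realized as the union of two embedded arcs in the sense required by Lemma 9: the representatives $\alpha_0,\alpha_i$ come from different homotopy equivalences $F_0,F_i$ and may cross in the interior of $S_g^1$, so the immediate presentation need not be embedded. Since Lemmas 9 and 10 concern only free homotopy classes, one can homotope the closed curve $\alpha_0\cdot\sigma_i\cdot\alpha_i^{-1}$ to eliminate such crossings while tracking the cut points through the homotopy; executing this carefully, and handling the degenerate possibility that $w$ is conjugate to $\partial S_g^1$, is the technical heart of the argument.
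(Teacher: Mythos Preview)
Your overall strategy coincides with the paper's: extract periodicity from Proposition~16, form a loop from two associated arcs and a boundary segment that represents a power $w^m$ of the primitive period, then invoke Lemmas~9 and~10 according to whether the curve $\beta$ representing $w$ is simple. The paper does exactly this, taking $\Upsilon_j$ of maximal overlap to pin down the primitive $w$ and then arguing that each $\Upsilon_i$ is shifted from $\Upsilon_0$ by an exact power $w^m$.

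Where you diverge is in the ``main obstacle'' you flag, and here you are making your life harder than necessary. Lemmas~9 and~10 require that $\beta^m$ be written as a union of two arcs each of which is \emph{individually} embedded; they do not require the two arcs to be disjoint from one another (indeed if $\beta$ is non-simple they cannot be). Each associated arc arises as a component of some $F_j^{-1}(p)$, hence is embedded in $S_g^1$. So there is nothing to homotope away: take the arc $\alpha_0$ from $\lambda$ to $\upsilon_0$ as one piece, and the boundary segment $\sigma_i$ concatenated with $\alpha_i$ as the other. Since $\lambda\in\Lambda$ is disjoint from the minimal segment of $\Upsilon_0$ containing $\sigma_i$, and $\alpha_i$ meets $\partial S_g^1$ only at its endpoints, this second piece is embedded as well. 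Your proposed cut at an interior midpoint of the pushed $\sigma_i$ is the wrong move: the resulting pieces do not have both endpoints on $\partial S_g^1$, so Lemmas~9 and~10 (whose proofs use $c_1,c_2\in\partial S_g^1$) would not apply. Cut at $\lambda$ and at $\upsilon_0$ instead.

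Two smaller points. First, your assertion that shifts of opposite sign yield freely homotopic associated arcs is not obvious and is not argued in the paper either; the exact constant $12$ is not load-bearing downstream, so this does not threaten the theorem, but as stated it needs justification. Second, the degenerate case $\beta\simeq\partial S_g^1$ does not actually occur: $w$ is a period of a proper subword of the (cyclically reduced) boundary word with $|w|\le\tfrac12|w_{\Upsilon_0}|$, so $w$ cannot be a cyclic conjugate of the full boundary word.
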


\begin{proof}

Let $\Upsilon_j$ have largest, non equal, intersection with $\Upsilon_0$ over the $\Upsilon_i$. By applying Proposition 16 to $\Upsilon_0$ and $\Upsilon_j$, we have $w_{\Upsilon_j} = w_{\Upsilon_i} = w^nv=(x_1x_2 \ldots x_k)^nv$. If $w= x^z$ for some $x \in F_{2g}$, then $x=x_1x_2\ldots x_{\frac{k}{z}}$ with $\frac{k}{z} \in \mathbb{Z}$. Otherwise for some $i\leq k$, $x_ix_{i+1}$ can be represented in $\pi_1(X_T)$ by a loop that does not cross $p$ which violates the the minimality of $|F_{\partial S_g^1}^{-1}(p)|$. As $w=(x_1x_2\ldots x_{\frac{k}{z}})^z$, we can find an $\Upsilon_i$ that has larger intersection with $\Upsilon_0$ than does $\Upsilon_j$ when $z>1$. Thus, $w$ is not periodic. 

Given an $\Upsilon_i$, we claim that it is shifted from $\Upsilon_0$ by $w^m$, i.e. the first $|\Upsilon_0 \setminus \Upsilon_i|+1$ points of $\Upsilon_0$ is an endpoint set with associated subword $w^m$. If not, then $w_{\Upsilon_i} = (x_hx_{h+1} \ldots x_{h+k-1})^nv = (x_1x_2 \ldots x_k)^nv$ for $1<h<k$, and so $w=(x_h \ldots x_k)(x_1 \ldots x_{h-1})=(x_1 \ldots x_{h-1})(x_h \ldots x_k)$ which implies $w$ is periodic.  

Now consider the loop formed by the union of the minimal containing segment of the first $|\Upsilon_0 \setminus \Upsilon_i|+1$ points of $\Upsilon_0$, and the arcs associated to the $(\Lambda, \Upsilon_0)$ and $(\Lambda, \Upsilon_i)$ partial arc systems. The minimal containing segment will be mapped to $w^m$, and the arcs will be mapped to paths homotopic to the constant path $p$. So our loop on $S_g^1$ will have the homotopy type of $w^m$. Now we apply the results of Section 2 to say that either the two arcs are contained in the same $1$-ball (Lemma 10), or one of the arcs is not embedded (Lemma 9), or $m<12$ (Lemma 9).Therefore, there are either at most $12$ associated arcs or they can all be contained in a $2$-ball.
\end{proof}

We can think of an arc system as being comprised of partial arc systems formed by its parallel families. We will abuse notation slightly and say that an endpoint set, $\Upsilon$, is contained in an arc system if $\Upsilon \subseteq \Lambda$ or $\Theta$ for one of these partial arc systems, $(\Lambda, \Theta)$. 

\begin{Prop} There exists a collection $\Lambda_1,\Lambda_2,...\Lambda_{4(6g-3)}$ of endpoint sets with $|\Lambda_i| \geq \frac{|F_{\partial S_g^1}^{-1}(p)|}{4(6g-3)}$ such that every arc system contains a $\Lambda_i$. \end{Prop}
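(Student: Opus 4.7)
The plan is a direct pigeonhole argument. I would first show that any arc system partitions $F_{\partial S_g^1}^{-1}(p)$ into at most $2(6g-3)$ consecutive endpoint sets along $\partial S_g^1$, and then take the $\Lambda_i$ to be a uniform cyclic partition of $F_{\partial S_g^1}^{-1}(p)$ into $4(6g-3)$ blocks, so that the largest endpoint set of any arc system must contain at least one of them entirely.

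First I would bound the number of parallel families in any arc system by $6g-3$. The arcs in $F_i^{-1}(p)$ must be essential: a boundary-parallel arc would bound a disk with a proper sub-arc $\sigma$ of $\partial S_g^1$, and since the arc maps to $p$, the loop $F_{\partial S_g^1}(\sigma)$ would be null-homotopic in $X_T$; but $\sigma$ corresponds to a proper sub-word of the reduced boundary word, which is automatically reduced and hence non-trivial in $F_{2g}$, a contradiction. So the parallel families correspond to pairwise disjoint, pairwise non-isotopic essential arc classes on $S_g^1$, and the standard Euler-characteristic bound limits their number to $6g-3$. By Proposition 13 each parallel family contributes exactly two endpoint sets, and since arcs of distinct parallel families cannot intersect, these endpoint sets are non-interleaving along $\partial S_g^1$, partitioning $F_{\partial S_g^1}^{-1}(p)$ into at most $2(6g-3)$ consecutive pieces.

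For the second step I would label the points of $F_{\partial S_g^1}^{-1}(p)$ cyclically along $\partial S_g^1$ and cut them into $4(6g-3)$ consecutive blocks $\Lambda_1,\ldots,\Lambda_{4(6g-3)}$, each of size at least $|F_{\partial S_g^1}^{-1}(p)|/(4(6g-3))$ up to rounding. Each $\Lambda_i$ is automatically an endpoint set by construction. For any arc system, pigeonhole applied to its at most $2(6g-3)$ endpoint sets yields one of size at least $|F_{\partial S_g^1}^{-1}(p)|/(2(6g-3))$, i.e.\ at least twice the block size; as a consecutive segment of at least this length, it must entirely contain at least one $\Lambda_i$. The main subtlety I anticipate is the exclusion of boundary-parallel arcs from $F_i^{-1}(p)$, which is what makes the $6g-3$ bound applicable; once that is settled, the rest is bookkeeping with pigeonhole and cyclic alignment.
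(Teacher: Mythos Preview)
Your proposal is correct and follows essentially the same route as the paper: partition $F_{\partial S_g^1}^{-1}(p)$ into $4(6g-3)$ consecutive blocks, use the Euler-characteristic bound of $6g-3$ parallel families together with Proposition~13 to find an endpoint set of size at least $\frac{|F_{\partial S_g^1}^{-1}(p)|}{2(6g-3)}$, and conclude by pigeonhole that it swallows a block. Your extra justification that the arcs in $F_i^{-1}(p)$ are essential (via minimality of $|F_i^{-1}(p)\cap\partial S_g^1|$) is a detail the paper leaves implicit when invoking the $6g-3$ bound, so including it is appropriate.
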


\begin{proof}
 Let the $\Lambda_i$ be a partition of $F_{\partial S_g^1}^{-1}(p)$ with $|\Lambda_i|=\left \lceil \frac{|F_{\partial S_g^1}^{-1}(p)|}{4(6g-3)} \right\rceil$. By a standard Euler characteristic argument, there exist at most $6g-3$ parallel families in any arc system, which partition $F_{\partial S_g^1}^{-1}(p)$. Hence, one such family contains at least $\frac{|F_{\partial S_g^1}^{-1}(p)|}{6g-3}$ points of $F_{\partial S_g^1}^{-1}(p)$, and this family forms a partial arc system, $(\Lambda, \Theta)$, with $|\Lambda|=|\Theta|\geq  \frac{|F_{\partial S_g^1}^{-1}(p)|}{2(6g-3)}$ (Proposition 13). Thus, both $\Lambda$ and $\Theta$ contain a $\Lambda_i$. 
\end{proof}

\begin{Lemma}  $\phi: FS_{2g} \mapsto \mathcal{A}(S_g^1)$ is coarsely well defined. \end{Lemma}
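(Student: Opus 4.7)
The plan is to cover $\phi(T)$ by a uniformly bounded number of uniformly bounded-diameter pieces in $\mathcal{A}(S_g^1)$ and then chain these pieces via arc-system sharing to obtain a uniform diameter bound on $\phi(T)$. The main tools are Propositions 17 and 18, together with the observation that arcs from a common arc system $F_i^{-1}(p)$ are pairwise disjoint, hence at distance at most $1$ in $\mathcal{A}(S_g^1)$.

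\textbf{Reduction to anchor arcs.} Apply Proposition 18 to produce endpoint sets $\Lambda_1, \ldots, \Lambda_{4(6g-3)}$, each of cardinality at least $|F_{\partial S_g^1}^{-1}(p)|/(4(6g-3))$, such that every arc system contains some $\Lambda_j$ inside one of its parallel families. For each arc $a \in \phi(T)$, fix such a parallel family of its arc system and let $a^{*}$ be the associated arc; then $d_{\mathcal{A}}(a, a^{*}) \le 1$. It suffices to bound the diameter of the set $B$ of all anchor arcs $a^{*}$, which I partition as $B = \bigcup_j B_j$ according to which $\Lambda_j$ is used.

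\textbf{Covering each $B_j$.} By Proposition 15 the associated arc of the restricted partial arc system $(\Lambda_j, \Upsilon')$ (with $|\Upsilon'| = |\Lambda_j|$) is still $a^{*}$, so each $a^{*} \in B_j$ corresponds canonically to a window $\Upsilon' \subset F_{\partial S_g^1}^{-1}(p)$ of size $|\Lambda_j|$. Place $\lceil |F_{\partial S_g^1}^{-1}(p)|/|\Lambda_j| \rceil \le 4(6g-3)$ reference windows $\Upsilon_{j,k}$ equally spaced around $\partial S_g^1$; every $\Upsilon'$ then overlaps some $\Upsilon_{j,k}$ in at least $|\Lambda_j|/2$ points. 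Proposition 17, applied to $(\Lambda_j, \Upsilon_{j,k})$, gives that the associated arcs of all such partial arc systems either total at most $12$ or all lie in a common $2$-ball. This covers $\phi(T)$ (up to distance $1$) by at most $16(6g-3)^2$ pieces of diameter at most $4$.

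\textbf{Chaining the pieces.} The main obstacle is producing a uniform diameter bound from this cover, since a priori two pieces involving different $\Lambda_j$'s or distant windows need not share an arc. The key observation is that whenever two pieces both contain an arc from a common arc system $F_i^{-1}(p)$, those arcs are disjoint, so any two arcs drawn from the two pieces lie at distance at most $4+1+4=9$ in $\mathcal{A}(S_g^1)$. I would then show that the resulting ``piece graph'' (with edges recording shared arc systems) is connected with bounded combinatorial diameter, exploiting that every arc system contributes an anchor arc for each $\Lambda_j$ contained in any of its $\le 6g-3$ parallel families, and that interpolating between two homotopy equivalences shifts the windows $\Upsilon'$ only gradually, populating intermediate pieces whose reference windows differ by a single half-shift. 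Tracking the per-link cost together with the size of the piece graph should yield a constant of the form $C = C(g)$, polynomial in $6g-3$, matching the author's explicit $5376(6g-3)^2$.
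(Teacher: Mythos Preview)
Your covering step---applying Proposition~18 to fix the $\Lambda_j$'s, passing to anchor arcs at distance~$1$, and then using Proposition~17 on overlapping windows---is essentially the paper's argument, and your bookkeeping is in the right ballpark (the paper ends with $384(6g-3)^2$ $3$-balls; the extra factors come from the ``at most $12$ arcs'' alternative in Proposition~17, which you absorb into the diameter of a piece but should really count separately).

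The genuine gap is in your ``chaining the pieces'' step. You assert that interpolating between $F_i$ and $F_j$ ``shifts the windows $\Upsilon'$ only gradually, populating intermediate pieces whose reference windows differ by a single half-shift.'' This is not how the interpolation behaves: the boundary map $F_{\partial S_g^1}$ is fixed throughout, so the endpoint sets $\Lambda$ and $\Upsilon$ never move at all. What changes during the homotopy is which points of $F_{\partial S_g^1}^{-1}(p)$ are paired by arcs, and this changes discretely at non-transverse moments, not by a sliding of windows. Consequently there is no obvious reason the piece graph should have small diameter, and the argument as written does not close.

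The paper sidesteps the piece graph entirely. It shows directly that $\phi(T)$ is \emph{connected} in $\mathcal{A}(S_g^1)$: follow the homotopy from $F_i$ to $F_j$; at each non-transverse moment a finite set of arcs meet at a point and reassemble, and one can arrange that some old arc is disjoint from the new arcs (hence adjacent in $\mathcal{A}(S_g^1)$), while perturbing the local speed makes the intermediate configuration an honest $F_k^{-1}(p)$, so the new arcs lie in $\phi(T)$. Once $\phi(T)$ is connected and covered by $N$ balls of radius~$3$, any path in $\phi(T)$ can visit each ball at most once, giving diameter $\le 7N$. This is what produces the $2688(6g-3)^2$ bound. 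Replace your piece-graph paragraph with this connectedness argument and the proof goes through.
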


\begin{proof} 

Every arc system contains a $\Lambda_i$, and hence every arc system will contain an arc associated to a partial arc system of the form $(\Lambda_i, \Upsilon_{i,j})$ for some collection of endpoint sets $\Upsilon_{i,j}$. 
We note that for all $j$, $|\Upsilon_{i,j}|=|\Lambda_i| \geq \frac{|F_{\partial S_g^1}^{-1}(p)|}{4(6g-3)}$ (Proposition 18). Hence, there is a collection of at most $8(6g-3)$ $\Upsilon_{i,j}$'s such that each pairwise intersection has cardinality less than $\frac{1}{2}|\Lambda_i|$. By applying Proposition 17 to any other $\Upsilon_{i,j}$, we see that each $\Lambda_i$ will contribute arcs that can be contained in $96(6g-3)$ $2$-balls. The arcs that form an arc system do not intersect, so the image of $\phi$ can be contained in $1$-balls around these associated arcs taken over all the $\Lambda_i$. Therefore, we can contain $\phi(T)$ in $384 (6g-3)^2$ $3$-balls in $\mathcal{A}(S_g^1)$.

   To show $\phi(T)$ has bounded diameter, we will show $\phi(T)$ is connected. Given two arcs, $\alpha \in F_i^{-1}(p)$ and $\beta \in F_j^{-1}(p)$, we will form a path by following the homotopy between $F_i$ and $F_j$. During the homotopy, it is possible that the equivalence map is no longer transverse, and hence we see a picture where two arcs meet and separate into different arcs. In terms of $ \mathcal{A}(S_g^1)$, we can view the homotopy as a sequence of arc switching. We can arrange for any collection of arcs that meet during the homotopy to intersect in exactly one point, and hence at least one original arc will not intersect the new arcs. By altering the local speed of the homotopy if necessary, we can ensure these new arcs are in $\phi(T)$. The maximal length of such a path will be $2688 (6g-3)^2$, which bounds the diameter of $\phi(T)$.
\end{proof}

\begin{customthm}{A} $\mathcal{A}(S_g^1)$ is a $(1,C)$-coarse Lipschitz retract of $FS_{2g}$. In particular, $\psi$ is a $Mod(S_g^1)$-equivariant quasi-isometric embedding of  $\mathcal{A}(S_g^1)$ into $FS_{2g}$.   \end{customthm}

\begin{proof}

Recall that for an arc $\alpha$, $\psi(\alpha)$ is the splitting associated to the total space formed by collapsing a neighborhood of $\alpha$ to $\partial S_g^1$. Such a collapse is a homotopy equivalence, and the preimage of a point on the edge will naturally return $\alpha$, and only $\alpha$. If $\psi(\alpha)=T$, then we have $|F_{\partial S_g^1}^{-1}(p)|=2$, and so $\phi(T)=\alpha$.

 If we have two adjacent $1$-edge splittings, $T_1$ and $T_2$, then they are a common refinement of a $2$-edge splitting, $T_3$. Consider $X_{T_3}$ with two marked points, one on each edge and a transverse homotopy equivalence between $S_g^1$ and $X_{T_3}$. If we factor the equivalence through the collapse maps of $X_{T_3}$ to $X_{T_1}$ and $X_{T_2}$, then we see that the preimage of each point is a collection of arcs contained $\phi (T_1)$ and $\phi (T_2)$ respectively. These arcs will not intersect as they are contained in preimages of distinct points, and so $diam_{\mathcal{A}(S_g^1)}(\phi(T_1) \cup \phi(T_2)) \leq d_{FS_{2g}}(T_1,T_2) + 5376(6g-3)^2$ (Lemma 19). Therefore, $\phi$ is a $(1,C)$-coarse Lipschitz retraction of $FS_{2g}$ onto $\psi(\mathcal{A}(S_g^1))$ for $C=5376(6g-3)^2$. In particular, the existence of a coarse retraction onto a subspace easily implies that the inclusion map, in this case $\psi$, is a quasi-isometric embedding.
\end{proof}

\section{Cyclic Splittings}
   Showing that $\mathcal{AC}(S_g^1)$ is a coarse Lipschitz retract of $\mathcal{Z}_{2g}$ will proceed in a very similar manner as Theorem A. With this in mind, we will address the differences in this section rather than repeating all of Section 3. The main hurdle will be to expand the definition of $\phi$ to non-trivial cyclic splittings in a way that is compatible with our previous arguments.

\subsection{Preliminaries}

A $1$-edge cyclic splitting has one of the following forms \cite{HorbezWade2015}:

\begin{itemize}
\item A separating $1$-edge free splitting, $F_n = A * B$, where $A$ and $B$ are complementary proper free factors of $F_n$.

\item A non-separating $1$-edge free splitting, $F_n= C*$, where $C$ is a rank $n-1$ free factor of $F_n$. 

\item A separating $1$-edge $\mathbb{Z}$-splitting, $F_n= A*_{\langle y \rangle}  (B*\langle y \rangle)$, where $A$ and $B$ are complementary proper free factors of $F_n$ and $y\in A$.

\item A non-separating $1$-edge $\mathbb{Z}$-splitting, $F_n= (C*\langle y^t \rangle)*_{\langle y \rangle}$, where $C$ is a rank $n-1$ free factor of $F_n$, $y \in C$ and $t$ denotes the stable letter. 

\end{itemize}

   As before, in order to define our retraction we will consider homotopy equivalences between $S_g^1$ and $X_T$. If $F$ is such an equivalence and $T$ is a $1$-edge $\mathbb{Z}$-splitting, we will consider $F^{-1}(c)$ where $c$ is an embedded curve on the interior of the edge space of $X_T$ (the edge space is homeomorphic to  $S^1 \times [0,1]$).

\begin{Def} For a $1$-edge $\mathbb{Z}$-splitting, $T$, fix a curve, $c$, on the interior of the edge space of $X_T$. Let $\lbrace F_i \rbrace_{i \in I}$ be a collection of homotopy equivalences, $F_i: S_g^1 \mapsto X_T$, such that $F_i$ is transverse to $c$, $|F_i^{-1}(p) \cap \partial S_g^1|$ is minimal over all equivalences, and $F_i(\partial S_g^1) \cap c$ is a single point, $p$.

We also have the following additional restriction on the $F_i$ for a separating splitting, $A*_{\langle y \rangle}  (B*\langle y \rangle)$: words in $(B*\langle y \rangle)$ which are represented by segments of the boundary mapped to loops based at $p$ begin and end in $B$. \end{Def}

\begin{Def} Let $\phi_Z: \mathcal{Z}_{2g} \mapsto \mathcal{AC}(S_g^1)$ be $\phi$ for $1$-edge free splittings. Given a $1$-edge $\mathbb{Z}$-splitting, $\phi_Z$ is the collection of arcs and curves contained in a $F_i^{-1}(c)$ over all $i\in I$.\end{Def}

  If $T$ is a separating splitting, the segments of the boundary mapped to loops based at $p$ will alternate between words in $A$ and words in $ (B*\langle y \rangle)$. For a word in $ (B*\langle y \rangle)$, we can push its initial and terminal $y$'s into $A$. With these restrictions on the $F_i$, we can take the $F_i$ to agree on $\partial S_g^1$. 
\subsection{Alterations}
Now that arcs can now be mapped to $y^j$ for $j \in \mathbb{Z}$, we need to rework Proposition 13 for $\mathbb{Z}$-splittings. 

\begin{Prop} The endpoints of every parallel family in an an arc system, possibly excluding the endpoints of two of the arcs, form a partial arc system. Furthermore, the images of these arcs are trivial loops.   \end{Prop}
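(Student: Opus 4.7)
The plan is to adapt the proof of Proposition 13 to the $\mathbb{Z}$-splitting setting, where arcs in $F_i^{-1}(c)$ can now map to non-trivial loops $y^{j_k} \in \pi_1(c)$ rather than being collapsed to the basepoint $p$. The main new task is to show that in each parallel family at most two arcs can have trivial image (the rest sharing a common $j$ value), and that excluding the endpoints of these two exceptional arcs still yields a partial arc system.

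First, I carry over the maximal-disk construction of Proposition 13. For a parallel family $\alpha_1,\ldots,\alpha_m$, each sub-disk $D_k$ between consecutive arcs $\alpha_k,\alpha_{k+1}$ is bounded by the arcs together with boundary segments $s_k,t_k$; writing $u_k=F_i(s_k)$, $v_k=F_i(t_k)$ in $\pi_1(X_T)$ and $F_i(\alpha_k)=y^{j_k}$, the null-homotopy of $F_i(\partial D_k)$ yields the identity $u_k = y^{j_k} v_k y^{-j_{k+1}}$, generalizing the $u_k=v_k$ relation from the free setting.

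Second, I analyze how the sequence $(j_k)$ can vary. The disk identity says that $y^{j_k}$ and $y^{j_{k+1}}$ are conjugate in $\pi_1(X_T)$; inside the amalgamated free product $A *_{\langle y\rangle}(B*\langle y\rangle)$ or the HNN extension $(C*\langle y^t\rangle)*_{\langle y\rangle}$ of a $\mathbb{Z}$-splitting, this conjugation forces either $j_k = j_{k+1}$ or one of the two powers to be trivial. Using the minimality of $|F_i^{-1}(p)\cap \partial S_g^1|$ and the begin-and-end-in-$B$ normalization of Definition 20, I argue that at most two arcs in the family can deviate from a common $j$ value, and any deviation must satisfy $j_k=0$; more deviations would force cancellations in the cyclic boundary word that contradict minimality.

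Finally, after discarding the at most two arcs with $j_k=0$, the remaining arcs share a single value $j$. Applying the null-homotopy of the maximal disk bounded by the new extreme arcs yields $w_{\Lambda'} = y^{j} w_{\Upsilon'}^{-1} y^{-j}$; when $j=0$ this is already the partial arc system condition, and when $j\neq 0$ the same minimality argument forces $w_{\Upsilon'}$ to lie in the centralizer $\langle y\rangle$ of $y$, so the $y^j$ factors telescope to give $w_{\Lambda'} = w_{\Upsilon'}^{-1}$ as required, while the excluded arcs have trivial image by construction. The main obstacle is the counting argument in the second step: a case-by-case normal-form analysis in the two $\mathbb{Z}$-splitting types, together with a delicate use of Definition 20 and minimality of $F_i$, is needed both to cap the number of deviations at two and to establish the centralizer condition in the non-trivial case.
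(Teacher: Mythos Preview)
Your proposal has the conclusion inverted. The proposition asserts that the arcs \emph{remaining} after exclusion have trivial image, not the excluded ones: the paper shows that every interior arc of the parallel family (any arc bordering two sub-disks) is mapped to the trivial loop, while only the two outermost arcs forming the maximal disk may carry a non-trivial image $y^k$. You instead argue that most arcs share a common non-trivial exponent $j$ and that at most two exceptional arcs have $j_k = 0$, and you then exclude those trivial-image arcs. This inversion makes the argument unusable downstream, since Propositions 16--17 and Lemma 19 require the retained arcs to be mapped to the constant loop at $p$ so that the partial arc system condition $w_\Lambda = w_\Upsilon^{-1}$ holds without $y^j$ twists.

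Your second step also contains a logical gap. The disk relation $u_k = y^{j_k} v_k\, y^{-j_{k+1}}$ is not a conjugacy between $y^{j_k}$ and $y^{j_{k+1}}$; it relates the boundary words $u_k$ and $v_k$, so nothing about normal forms in the amalgam forces $j_k = j_{k+1}$. The paper's argument instead fixes an interior arc $\alpha$ bordering two adjacent sub-disks and reads off the exponent of $\alpha$ from each. In the separating case, one of the adjacent boundary sub-words lies in $B*\langle y\rangle$ and, by the normalization in Definition 20, begins and ends in $B$; plugging this into $w_{\Lambda_1}^{-1} = y^k w_{\Lambda_2} y^j$ forces the exponents on both sides to vanish. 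In the non-separating case, minimality of $|F_{\partial S_g^1}^{-1}(p)|$ forces the stable letter $t$ to appear at the junction between the two sub-words adjacent to $\alpha$, and this $t$ blocks any nonzero $y$-power on the $\alpha$ side of one of the two disk relations. In both cases $F_i(\alpha)$ is trivial for every interior $\alpha$, and your final centralizer step (forcing $w_{\Upsilon'}\in\langle y\rangle$ when $j\neq 0$) is neither needed nor obtainable from minimality.
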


\begin{proof}

Let $\Lambda_1$ and $\Lambda_2$ be a pair of endpoint sets that arise as the endpoints of exactly two arcs of the parallel family. These two arcs together with the minimal containing segments of $\Lambda_1$ and $\Lambda_2$ form a disk. The disk gives us the relation $w_{\Lambda_1}^{-1}=y^k w_{\Lambda_2} y^j$, where the arcs are mapped to $y^k$ and $y^{j}$ for $k,j \in \mathbb{Z}$ up to orientation of the arcs. Let $\alpha$ be an arc that in part forms two such endpoint set pairs: $\Lambda_1$, $\Lambda_2$ and $\Upsilon_1$, $\Upsilon_2$ (Figure 2).

\begin{figure}[h!]

\centering

\labellist \small 
\pinlabel $\Lambda_1$ at 68 154
\pinlabel $\Upsilon_1$ at 204 154
\pinlabel $\Lambda_2$ at 68 -15
\pinlabel $\Upsilon_2$ at 204 -15
\pinlabel $\alpha$ [l] at 140 70

\endlabellist

\includegraphics[scale=.5]{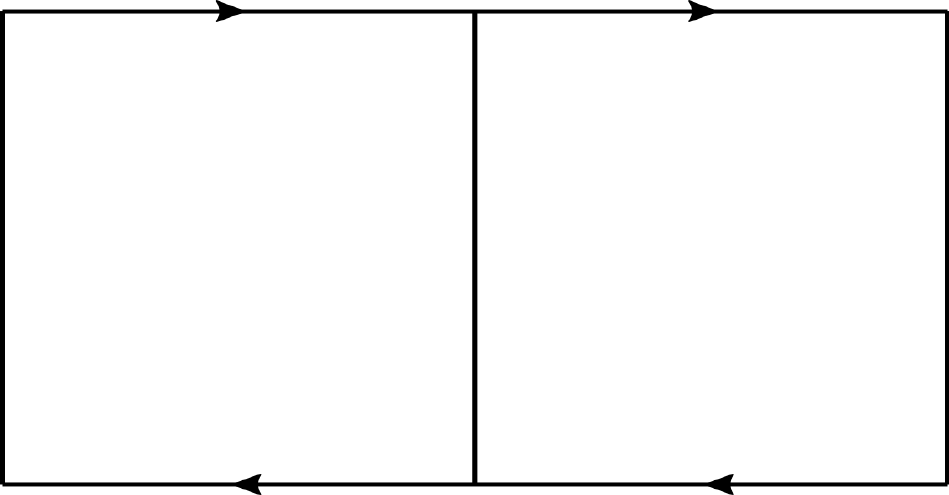}
 \setlength{\abovecaptionskip}{20pt} 
\caption{}

\end{figure}

   For a separating $\mathbb{Z}$-splitting, $A*_{\langle w \rangle}  (B*\langle y \rangle)$, we can take $w_{\Lambda_1} \in B*\langle y \rangle$  without loss of generality as $w_{\Lambda_1}$, $w_{\Upsilon_1}$ alternate in $A$ and $B*\langle y \rangle$. Since $w_{\Lambda_1}$ begins and ends in $B$, $w_{\Lambda_1}^{-1}=y^k w_{\Lambda_2} y^j$ implies $k=j=0$, and so $\alpha$ is mapped to a trivial loop. 

   Without loss of generality, if we have a non-separating $\mathbb{Z}$-splitting, $(C * \langle y^t  \rangle)*_{\langle y \rangle}$, then either $w_{\Lambda_1}$ ends in $t$ or $w_{\Upsilon_1}$ begins in $t$. Otherwise $|F_i^{-1}(p) \cap \partial S_g^1|$ would not be minimal as we could remove the shared point of $\Lambda_1$ and $\Upsilon_1$. If $w_{\Lambda_1}$ ends in $t$ then $w_{\Lambda_1}^{-1}=y^k w_{\Lambda_2} y^j$ implies $k=0$ and if $w_{\Upsilon_1}$ begins in $t$ then $w_{\Upsilon_1}^{-1}=y^m w_{\Upsilon_2} y^{-k}$ implies $k=0$. Note that $\alpha$ corresponds to $w^k=e$.

Now all the arcs, except for the two arcs which form the maximal disk, are mapped to trivial loops. This allows us to proceed with the proof of Proposition 13 with these two arcs excluded.
\end{proof}
In light of the differences between Proposition 22 and Proposition 13, we will now consider an altered form of Proposition 18:

\begin{Prop} There exists a collection $\Lambda_1,\Lambda_2,...\Lambda_{8(6g-3)}$ of non-empty endpoint sets with $|\Lambda_i| \geq \frac{|F_{\partial S_g^1}^{-1}(p)|}{4(6g-3)}-1$ such that every arc system contains a $\Lambda_i$. \end{Prop}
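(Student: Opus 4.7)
The plan is to adapt the argument of Proposition 18, adjusting for two features of the cyclic setting. First, Proposition 22 only yields a partial arc system after possibly excluding up to two arcs from a parallel family, which shrinks the guaranteed endpoint set size. Second, an arc system may now also contain curves, which take up parallel family slots without contributing endpoints to $F_{\partial S_g^1}^{-1}(p)$. These two effects are what force the doubling of the count from $4(6g-3)$ to $8(6g-3)$ and the one-point shrinkage in the guaranteed size.

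To construct the $\Lambda_i$, I would cyclically partition $F_{\partial S_g^1}^{-1}(p)$ into $8(6g-3)$ consecutive blocks $B_1,\ldots,B_{8(6g-3)}$ of near-equal size $\lfloor N/(8(6g-3))\rfloor$ or $\lceil N/(8(6g-3))\rceil$, where $N = |F_{\partial S_g^1}^{-1}(p)|$, and set $\Lambda_i := B_i \cup B_{i+1}$ (indices modulo $8(6g-3)$), discarding the empty ones. Each remaining $\Lambda_i$ is a consecutive segment of $F_{\partial S_g^1}^{-1}(p)$ containing no other points, hence is an endpoint set in the sense of Definition 11, with cardinality at least $N/(4(6g-3)) - 1$.

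To verify that every arc system contains some $\Lambda_i$, I would invoke the standard Euler characteristic bound: an arc system on $S_g^1$ contains at most $6g-3$ parallel families, now counting both arc and curve families. Since curves contribute no points to $F_{\partial S_g^1}^{-1}(p)$, the $N/2$ arcs of the system split among at most $6g-3$ parallel families of arcs, so some family has size $k \geq N/(2(6g-3))$. By Proposition 22, after excluding up to two arcs of this family---which, for the remaining endpoints to remain a single consecutive segment, must be the two boundary arcs of the family---one obtains a partial arc system $(\Lambda,\Upsilon)$ with $|\Lambda| = |\Upsilon| \geq k - 2 \geq N/(2(6g-3)) - 2$. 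This is large enough that the consecutive segment $\Lambda$ spans two full adjacent blocks $B_j, B_{j+1}$, so $\Lambda_j \subseteq \Lambda$.

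The main subtlety will be the bookkeeping around the two arcs excluded by Proposition 22---specifically, ensuring they can be taken at the boundary of the parallel family so that the remaining endpoints still form a single consecutive segment, which is what lets the endpoint sets $\Lambda, \Upsilon$ qualify under Definition 11. This constraint is precisely what necessitates both the doubling of the family count and the one-point shrinkage; after those adjustments the containment step becomes a routine consequence of the partition construction and the size bound $N/(2(6g-3)) - 2$.
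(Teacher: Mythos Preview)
Your approach matches the paper's: pigeonhole on the $\leq 6g-3$ parallel families to find a large one, apply Proposition~22 to extract a partial arc system $(\Lambda,\Theta)$ with $|\Lambda|\geq N/(2(6g-3))-2$, and then observe that any such consecutive segment must contain one member of a fixed collection of at most $8(6g-3)$ endpoint sets. The paper's construction of the $\Lambda_i$ is simpler than yours: it takes a \emph{disjoint} partition of $F_{\partial S_g^1}^{-1}(p)$ into consecutive blocks of size $\lceil N/(4(6g-3))\rceil-1$ (setting $|\Lambda_i|=1$ when that quantity is nonpositive), rather than overlapping pairs of half-size blocks. Your overlapping construction works in spirit, but the bookkeeping is looser: two floor-size blocks only guarantee $|\Lambda_i|\geq N/(4(6g-3))-2$, not $-1$, and your containment claim that $\Lambda$ spans two full adjacent blocks needs $|\Lambda|$ to exceed roughly three block-lengths, which fails for small $N$ without a separate case. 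These are patchable, not structural, issues.

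One correction to your narrative: curves play no role in forcing the doubling to $8(6g-3)$. A curve family contributes zero points to $F_{\partial S_g^1}^{-1}(p)$, so its presence only \emph{helps} the pigeonhole among arc families. Both the doubling and the $-1$ shrinkage come entirely from the two arcs dropped in Proposition~22. Your observation that those two excluded arcs are the outermost ones of the family (the pair forming the maximal disk), so that the remaining endpoints stay consecutive and still qualify as an endpoint set, is correct and is the only genuine new subtlety over Proposition~18.
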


\begin{proof}

 Let the $\Lambda_i$ be a partition of $F_{\partial S_g^1}^{-1}(p)$ with $|\Lambda_i|=\left \lceil \frac{|F_{\partial S_g^1}^{-1}(p)|}{4(6g-3)} \right\rceil-1$, when $\left \lceil \frac{|F_{\partial S_g^1}^{-1}(p)|}{4(6g-3)} \right\rceil <2$, we instead let $|\Lambda_i|=1$. We need at most $8(6g-3)$ $\Lambda_i$'s. As before, there exist at most $6g-3$ parallel families in any arc system, which partition $F_{\partial S_g^1}^{-1}(p)$. Hence, one such family contains at least $\frac{|F_{\partial S_g^1}^{-1}(p)|}{6g-3}$ points of $F_{\partial S_g^1}^{-1}(p)$, and this family forms a partial arc system, $(\Lambda, \Theta)$, with $|\Lambda|=|\Theta|\geq  \frac{|F_{\partial S_g^1}^{-1}(p)|}{2(6g-3)}-2$ (Proposition 22). Thus, both $\Lambda$ and $\Theta$ contain a $\Lambda_i$. 
\end{proof}

\begin{customthm}{B} $\mathcal{AC}(S_g^1)$ is a $(1,C)$-coarse Lipschitz retract of $\mathcal{Z}_{2g}$. In particular, $\psi_Z$ is a $Mod(S_g^1)$-equivariant quasi-isometric embedding of $\mathcal{AC}(S_g^1)$ into $\mathcal{Z}_{2g}$.   \end{customthm}

\begin{proof}
In order to prove the theorem, we can follow the proof in Section 3, replacing Propositions 13 \& 18 with Propositions 22 \& 23 respectively. Lemma 19 proceeds as before with the possible addition of curves in the image of $\phi_Z$: There can be only one curve, $\gamma$, in $F^{-1}_j(c)$ for a $1$-edge $\mathbb{Z}$-splitting, $T$, because $\gamma$ corresponds to the cyclic letter, $y$, of $T$. If no arcs are contained in $F^{-1}_j(c)$, then there will be no arcs contained in any of the $F^{-1}_i(c)$, so $\phi_Z(T)=\gamma$. Otherwise, $d(\gamma, \alpha) =1$ for an arc, $\alpha \in F^{-1}_j(p)$. Therefore, the addition of curves in the image of $\phi_Z$ does not necessitate any further changes. Finishing the proof in Section 3 yields that $\phi_Z$ is a $(1,C)$-coarse Lipschitz retraction of $\mathcal{Z}_{2g}$ onto $\psi_Z(\mathcal{AC}(S_g^1))$. Using Propositions 22 \& 23 alters the bounds, and we have $C=21504(6g-3)^2$.
\end{proof}

\bibliography{general}
\bibliographystyle{amsalpha}

\end{document}